\newtheorem{theorem}{Theorem}[section]
\newtheorem{lemma}[theorem]{Lemma}
\newtheorem{remark}[theorem]{Remark}
\newtheorem{corollary}[theorem]{Corollary}
\newtheorem{definition}[theorem]{Definition}
\newcommand{\be}{\begin{equation}}
\newcommand{\ee}{\end{equation}}
\newcommand {\R}{\mathbb{R}}
\begin{document}

\title[Symmetry breaking for the Fractional Laplacian] {Symmetry breaking for
an elliptic equation involving the Fractional Laplacian}
\author{Pablo L. De Nápoli}
\address{IMAS (UBA-CONICET) and Departamento de Matem\'atica, Facultad de Ciencias Exactas y Naturales, Universidad de Buenos Aires, Ciudad Universitaria, 1428 Buenos Aires, Argentina}
\email{pdenapo@dm.uba.ar}

\thanks{Supported by ANPCyT under grant PICT 1675/2010, by CONICET under  grant PIP 1420090100230 and by Universidad de
Buenos Aires under grant 20020090100067. The author is a member of CONICET, Argentina.}

\keywords{fractional Laplacian, symmetry breaking, Strauss inequality, embedding theorems}
\subjclass[2000]{35J60, 42B37}
% 35J60 = Nonlinear elliptic equations
% 42B37 = Harmonic analysis and PDE

\maketitle

\begin{abstract}
We study the symmetry breaking phenomenon for an elliptic equation involving
the fractional Laplacian in a large ball. Our main tool is an 
extension of the Strauss radial lemma involving the fractional
Laplacian, which might be of independent interest; and from which we derive
compact embedding theorems for a Sobolev-type space of 
radial functions with power weights.
\end{abstract}

\section{Introduction}

Recently, elliptic problems involving the fractional Laplacian operator
$(-\Delta)^{s}$ (see section \ref{definitions} for its
definition) have received a great deal of attention. Due to the rotation invariance of the fractional Laplacian it makes sense to ask if the solutions of equations of the form
\be (-\Delta)^{s} u = f(|x|,u)  \label{general-equation} \ee
in a radially symmetric domain (for instance, with homogeneous Dirichlet
conditions) are necessarily radial or not.

\medskip

In this work, we investigate the problem
\be (-\Delta)^{s} u + |x|^a |u|^{q-2} u = |x|^b |u|^{p-2} u \label{our-equation} \ee
with $a,b>0$, and $u$ in the natural energy space for this problem, 
$H^{s}_{q,a}(\R^n)$ (see section \ref{definitions} for 
the precise definitions), and analogous problems in a ball  
$B_R =\{ x \in \R^n : |x| < R\}$ (for $q=2$). 

\medskip

Our aim is to extend to the fractional Laplacian setting, 
the results on existence of radial solutions and symmetry breaking obtained 
by P. Sintzoff in \cite{Sintzoff} 
on this equation \eqref{our-equation}  for the case $s=1$ 
(that of the usual Laplacian). Namely, our main result on symmetry breaking reads as 
follows:

\begin{theorem}
Let $n \geq 2$, $1/2 < s < 1$, $2<p<2^*=\frac{2n}{n-2s}$, $0 < a<n$ and 
$b>\frac{ap}{2}$. If in addition,
\be 
 a(p-2-2ps) + 4bs  <  2s(p-2)(n-1),
\label{compactness-condition}  
\ee
then for every $R>0$ large enough, problem 
\be  \left\{
\begin{gathered}
(-\Delta)^{s} u + |x|^a u = |x|^b u^{p-1} \quad \hbox{in} \; B_R \\
u>0 \;  \hbox{a.e. in}  \; B_R, \quad
u \equiv 0 \;  \hbox{in} \; \R^n-B_R
\end{gathered}
\right. \label{inaball} 
\ee
has a nontrivial radial weak solution and a nonradial one (in
the natural energy space $H^s_{q,a,0,rad}(B_R)$ for this problem, see 
section \ref{definitions}) .
\label{theorem-symmetry-breaking}
\end{theorem}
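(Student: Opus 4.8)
The plan is to follow the classical Ni-type strategy for symmetry breaking, adapted to the fractional setting via the compact embedding theorems announced in the abstract. The solutions will be obtained as minimizers of the Rayleigh quotient
\[
Q(u) = \frac{\displaystyle\int_{B_R} u(-\Delta)^s u \, dx + \int_{B_R} |x|^a u^2 \, dx}{\left(\displaystyle\int_{B_R} |x|^b |u|^p \, dx\right)^{2/p}}
\]
over two different spaces: first, the full energy space $H^s_{2,a,0}(B_R)$, which by the Pohozaev/Ni philosophy will eventually \emph{not} contain a radial minimizer once $R$ is large; and second, the radial subspace $H^s_{2,a,0,rad}(B_R)$, where a minimizer always exists. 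So the argument has two independent halves: (i) existence of a radial solution for all $R$, and (ii) existence of a genuinely nonradial solution for $R$ large. Step (i) rests on the radial compact embedding $H^s_{2,a,0,rad}(B_R)\hookrightarrow L^p_{|x|^b}(B_R)$, which should follow from the fractional Strauss lemma proved earlier in the paper together with the hypotheses $2<p<2^*$, $a<n$, $b>ap/2$: one takes a minimizing sequence in the radial space, extracts a weakly convergent subsequence, upgrades to strong convergence in the weighted $L^p$ norm by compactness, concludes the infimum is attained by some $u_{rad}\ge 0$, and then applies the strong maximum principle for $(-\Delta)^s$ (and the usual regularity/bootstrap) to get $u_{rad}>0$ a.e. A standard Lagrange-multiplier rescaling makes $u_{rad}$ solve \eqref{inaball}.

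Step (ii) is the heart of the matter. The key quantitative input is condition \eqref{compactness-condition}: I expect it is exactly what guarantees that the \emph{full} (non-radial) space also compactly embeds into $L^p_{|x|^b}(B_R)$ for each fixed $R$ — because on a bounded ball the weight $|x|^b$ with $b>0$ is bounded and the only obstruction to compactness of $H^s\hookrightarrow L^p$ would be at the subcritical exponent, which holds since $p<2^*$; more precisely \eqref{compactness-condition} should be the hypothesis under which the relevant constrained infimum over the whole space is attained and, simultaneously, the comparison of energy levels below works out. Concretely, let $c_{rad}(R)$ and $c(R)$ denote the infima of $Q$ over the radial and full spaces respectively; trivially $c(R)\le c_{rad}(R)$. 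The plan is to show that for $R$ large the inequality is \emph{strict}, $c(R) < c_{rad}(R)$, which forces any full-space minimizer to be nonradial (it cannot be radial, else it would compute $c_{rad}(R)$ but have strictly smaller quotient — contradiction), and then to promote it to a positive nonradial weak solution of \eqref{inaball} exactly as in step (i).

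To prove the strict inequality $c(R)<c_{rad}(R)$ one estimates both quantities as $R\to\infty$. For $c_{rad}(R)$: using the fractional Strauss decay, a radial function in the unit-normalized energy class has $L^\infty$ and $L^p_{|x|^b}$ norms controlled with an explicit $R$-power, so one gets a lower bound $c_{rad}(R)\ge C R^{-\gamma_{rad}}$ with $\gamma_{rad}$ computable from $n,s,a,b,p$. For $c(R)$: one builds an explicit non-radial test function — the natural choice is a single bump $\varphi$ of fixed shape concentrated near a point $x_0$ with $|x_0|\sim R$, so that $|x|^a\approx R^a$ and $|x|^b\approx R^b$ on its support, plus (if needed to keep the Dirichlet energy finite in the fractional sense) a suitable cutoff — and computes $Q(\varphi)\le C R^{-\gamma_{\mathrm{bump}}}$. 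Comparing the two exponents, $\gamma_{\mathrm{bump}}<\gamma_{rad}$ is the assertion that reduces, after collecting powers, precisely to \eqref{compactness-condition}; rearranging $a(p-2-2ps)+4bs<2s(p-2)(n-1)$ should be the clean form of ``the concentrated bump beats the radial profile for large $R$.'' The main obstacle I anticipate is the fractional Dirichlet-energy estimate for the concentrated test function: unlike the local case, $\int \varphi(-\Delta)^s\varphi$ is a nonlocal double integral, so translating the bump to $|x_0|\sim R$ does not leave the kinetic term untouched in an obvious way, and one must either work with the Caffarelli-Silvestre extension to localize the energy or carefully control the tail interactions of the cutoff; getting the sharp $R$-power here — so that it matches \eqref{compactness-condition} rather than a weaker condition — is the delicate point. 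A secondary technical nuisance is the regularity-and-positivity step for weak solutions of the fractional equation with the singular-at-origin weights $|x|^a,|x|^b$, which needs the hypotheses $a<n$ (local integrability) and a fractional Brezis–Kato / De Giorgi iteration; I would cite the relevant results rather than reprove them.
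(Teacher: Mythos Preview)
Your overall architecture matches the paper's: compare the radial and unrestricted infima of the Rayleigh quotient, $c(R)\le c_{rad}(R)$, and show the inequality is strict for large $R$; attainment on $B_R$, Lagrange multipliers, Palais's symmetric criticality, and the strong minimum principle for $(-\Delta)^s$ then finish the job, as you say. But the mechanism is simpler than you describe, and you have the roles of the two hypotheses reversed.

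The radial side does not decay like a power of $R$: since $H^s_{2,a,0,rad}(B_R)\subset H^s_{2,a,rad}(\R^n)$, one has the \emph{uniform} lower bound $c_{rad}(R)\ge m(\infty):=\inf_{H^s_{2,a,rad}(\R^n)}Q$, and $m(\infty)>0$ is exactly the continuity of the embedding $H^s_{2,a,rad}(\R^n)\hookrightarrow L^p(\R^n,|x|^b\,dx)$. \emph{This} is where condition \eqref{compactness-condition} enters (it is the $q=2$ case of the paper's Gagliardo--Nirenberg/compact embedding theorem for the radial whole-space problem), not in the compactness on the ball --- which, as you correctly note in passing, follows from Rellich alone since $p<2^*$ and the weights are bounded on $B_R$. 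There is no need for a Strauss-based power lower bound on $c_{rad}(R)$.

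On the unrestricted side, $c(R)\to 0$: with $\varphi\in C_0^\infty(B_1)$, $\varphi\ge 0$, and $u_t(x)=\varphi(x-tv)$ for a unit vector $v$, one has $[u_t]_{H^s}=[\varphi]_{H^s}$ (the Gagliardo seminorm \emph{is} translation invariant, so the nonlocal-kinetic difficulty you anticipate does not occur), while $\int |x|^a u_t^2\le (1+t)^a C$ and $\int |x|^b u_t^p\ge (t-1)^b C'$. Hence $Q(u_t)\lesssim t^{a-2b/p}\to 0$ precisely because $b>ap/2$ --- so \emph{that} hypothesis, not \eqref{compactness-condition}, drives the test-function estimate. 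The passage from $\R^n$ to $B_R$ (so that $u_t$, or rather a cutoff of it, is admissible for $c(R)$) is handled by a lemma that $\eta_R u\to u$ in $H^s$ for a standard radial cutoff $\eta_R$.

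Putting these together gives $c(R)\to 0<m(\infty)\le c_{rad}(R)$, hence $c(R)<c_{rad}(R)$ for $R$ large. So your exponent-comparison program is both unnecessary and miscalibrated: carried out as written it would yield $\gamma_{\mathrm{bump}}=2b/p-a>0$ and $\gamma_{rad}=0$, and the comparison would \emph{not} reduce to \eqref{compactness-condition}.
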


This result extends theorem 3.1 in \cite{Sintzoff} to the fractional Laplacian
setting. We follow the proof there, which is based in the comparison of the
energy levels for the radial and non-radial ground states (minimizers of the
associated energy functional). However, some
technical difficulties arise from the non-local character of the fractional
Laplacian. For instance, we shall require  a technical lemma from \cite{Raman} in order to
perform the arguments based on cut-off functions. Moreover, we shall need to
prove a version of Strauss inequality adapted to this problem (see the
discussion below).

\medskip
There is a large literature on the subject of radial symmetry of solutions
of elliptic equations, starting from the classical result of B. Gidas, W.M. Ni and L. Nirenberg,  \cite{GNN} on the radial symmetry of positive solutions of \eqref{general-equation} in a ball, for the usual Laplacian ($s=1$). Hence, we cannot attempt to give a complete list of references here. In that work, the authors employed the moving plane method of A. D.  Alexandrov and J. Serrin. We recall that  this kind of result typically  requires  $f$ to be decreasing as a function of $|x|$ (see theorem $1^\prime$ of \cite{GNN}). For the fractional Laplacian, the radial symmetry of positive solutions of some equations like \eqref{general-equation} has been investigated by using the moving plane method  in \cite{FW} and \cite{FQT} (in both cases for $f$ independent of $x$).   

\medskip

On the other hand, when $f$ is not decreasing as a function of $|x|$, a symmetry breaking phenomenon may occur: equations of the form \eqref{general-equation} may admit positive non-radial solutions. A case that has been extensively studied is that of the Henón equation 
$$ -\Delta u = |x|^b u^{p-1} \quad b>0 $$
in a ball (\cite{Smets-Su-Willem}, \cite{Serra}). Hence it is reasonable to 
expect equation \eqref{our-equation} to exhibit a similar behavior.
Other related results on symmetry breaking for nonlinear elliptic equations include 
\cite{CW} (for a singular elliptic equation related to the
Caffarelli-Kohn-Nirenberg inequalities), \cite{FBLR} and \cite{LT} (where
the symmetry breaking phenomenon is investigated for the minimizer of the
trace inequality), and \cite{HC} where the results of \cite{Sintzoff} were 
extended to equations involving the p-Laplacian.

\medskip

There is also an increasing literature on Schr\"odinger equations with the
fractional Laplacian (\cite{Secchi}, \cite{Secchi}, \cite{FQT}, 
\cite{FLS}, \cite{DPV2} among other works). However, the question of symmetry breaking
for the ground states for \eqref{inaball} (or related equations involving
the fractional Laplacian) seems not to have been studied before.

\medskip

As usual, when studying problems in $\R^n$ by variational methods, it is essential to get
some compactness. It is well known that the Sobolev embedding
$$ H^s(\R^n) \subset L^p(\R^n) \; 2 \leq p \leq 2^*_s=\frac{ns}{n-2s} $$
is not compact, due to the translation invariance of the norms of those spaces. 
However, starting by the pioneering work of W. Strauss
\cite{Strauss}, it is known that one can get compactness (for $2<p<2^*$ by restricting the
problem to the subspace of radial functions. More precisely, W. Strauss 
proved in \cite{Strauss} that the following inequality holds for all
\emph{radially symmetric} functions in $H^1(\R^n)$.
$$ 
\| u \| \leq C \; |x|^{-(n-1)/2} \; \| u \|_{H^1(\R^n)} \quad \quad n \geq 2, |x|\geq 1
$$
This inequality means that radially symmetric functions in $H^1(\R^n)$
necessarily have a certain decay rate at infinity, and implies the required
compactness in \cite{Strauss}. Strauss result was later generalized in many directions, 
see \cite{DD6} for a survey of related results and elementary proofs of some of them within the 
framework of potential spaces.

\medskip

In this work, our main tool will be a version of the Strauss inequality for the energy space 
$H^{s}_{q,a}(\R^n)$ of our problem (theorem \ref{Strauss-generalized}) which might be of independent interest. The proof is
completely different from the analogous result for the case of a local operator ($s=1$) in
\cite{Sintzoff}, which was based on a simple integration by parts argument.
Instead, we use some ideas from harmonic analysis, namely: we split the
function into a high and a low frequency part. As a Strauss-type inequality implies the 
continuity of radial functions in that functional space outside the origin, we cannot expect this type 
of result to hold unless $s>1/2$, and that is why this restriction appears in theorem 
\ref{theorem-symmetry-breaking}. 

\medskip

This paper is organized as follows: In section \ref{definitions} we collect the basic 
definitions and notations that we use. In section, \ref{Strauss-section} we state and prove  
the generalization of Strauss inequality for the space $H^{s}_{q,a}(\R^n)$.
In section \ref{Holder-continuity-section}, we show that the same type of estimates
allows us to prove the H\"older continuity of the functions in that space,
outside the origin. This will be useful to us in order to be able to use the
Arzela-Ascoli theorem for the compactness arguments. In section
\ref{embeddings-section}, we derive from our Strauss-type inequality,  
a compact embedding result for radial
functions in $H^s_{q,a}(\R^n)$ into $L^p$ spaces with power weights. As a corollary, we obtain some existence
results for radial solutions of \eqref{our-equation} in the whole space
$\R^n$. Finally, in section \eqref{sb-section} we prove theorem
\ref{theorem-symmetry-breaking}.

\section{Basic definitions and notations}

\label{definitions}

In this section, we define the functional spaces that we are going to work with, and recall some basic facts about the fractional Laplacian.

\medskip

The fractional Laplacian $(-\Delta)^{s}$ can be defined for  functions $u \in \mathcal{S}(\R^n)$ (the Schwartz class) 
and $0<s<1$ by means of the Fourier transform as
\be \widehat{(-\Delta)^{s} u(\omega)} = 
|\omega|^{2s} \; \widehat{u}(\omega). \label{definition-Fourier} \ee

Alternatively, we may define  the fractional Laplacian by a hypersingular integral
$$ (-\Delta)^{s} u(x)= C(n,s) \; \hbox{P.V.} \; \int_{\R^n} \frac{u(x)-u(y)}{|x-y|^{n+2s}} \; dy $$
where
$$ C(n,s)= \left( \int_{\R^n} \frac{1-\cos(\zeta_1)}{|\zeta|^{n+2s}} \; d\zeta \right)^{-1} $$
is a normalization constant. It is also possible to define the fractional Laplacian by means of an extension problem (see \cite{CS}).

\medskip

For $0<s<1$, we consider the Gagliardo seminorm, 
\be [u]_{H^s} =  \| (-\Delta)^{s/2} u \|_{L^2} = 
\left(  \frac{C(n,s)}{2} \int_{\R^n} \int_{\R^n}  
\frac{|u(x)-u(y)|^2}{|x-y|^{n+2s}} \; dx \; dy \right)^{1/2} 
\label{Gagliardo-seminorm} \ee	
which, for $u \in \mathcal{S}(\R^n) $, can be also expressed in terms of the Fourier transform
$$ [u]_{H^s} = \left( \int_{\R^n} |\omega|^{2s} |\widehat{u}(\omega)|^2 \; 
d\omega \right)^{1/2}. $$
We refer to \cite{DPV} for more details.

\medskip

We denote by $L^q_a(\R^n)$ the weighted Lebesgue space with the norm as
$$ \| u \|_{L^q_a}= \left( \int_{\R^n} |u(x)|^q \; |x|^a \; dx 
\right)^{1/q}. $$

Then, we define the space $H^{s}_{q,a}(\R^n)$, which will be the natural energy space for problem \eqref{our-equation}, as the completion of $C_0^\infty(\R^n)$ with respect to the norm
\be \| u \|_{H^{s}_{q,a}} = \left( [u]_{H_s}^2 + 
\| u \|_{L^q_a}^2 \right)^{1/2}. \; \label{norm-definition} \ee

Moreover, we denote by $H^s_{q,a,rad}(\R^n)$  the subspace of radial 
functions in  $H^s_{q,a}(\R^n)$. 

In a similar way, given a domain $\Omega \subset \R^n$ we denote by 
$H^s_{q,a,0}(\Omega)$ the closure of $C_0^\infty(\Omega)$ in 
$H^s_{q,a}(\R^n)$. And by $H^s_{q,a,0,rad}(\Omega)$ its subspace consisting of 
radial functions, when $\Omega$ is radially symmetric.

When $q=2$ and $a=0$, we have that $H^{s}_{q,a}(\R^n)= H^s(\R^n)$
(the usual fractional Sobolev space, sometimes also denoted by
$W^{s,2}(\R^n)$, see \cite{DPV}).

\medskip

We recall that we have the following \emph{fractional Sobolev inequality} (see for 
instance \cite{DPV}, theorem 6.5):

\begin{theorem} 
Let $0 < s < \min(\frac{n}{2},1)$ and define the Sobolev critical exponent 
\be 2^*_s= 2^*(n,s)=\frac{2n}{n-2s} \label{critical-exponent}. \ee
Then, there exists a constant $C=C(n,s)$ such that 
$$ \| u \|_{L^{2^*_s}} \leq C \; [u]_{H^s} \quad \hbox{for all } \; u \in
H^s(\R^n). $$
\end{theorem}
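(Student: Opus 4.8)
The plan is to deduce the inequality from the classical Hardy--Littlewood--Sobolev inequality for the Riesz potential, using the identification of the Gagliardo seminorm with $\|(-\Delta)^{s/2}u\|_{L^2}$ recorded in \eqref{Gagliardo-seminorm}. Since $H^s(\R^n)$ is the completion of $C_0^\infty(\R^n)$ (indeed of $\mathcal{S}(\R^n)$) under the $H^s$-norm, and $[\,\cdot\,]_{H^s}\le\|\cdot\|_{H^s}$, it suffices to prove the estimate for $u\in\mathcal{S}(\R^n)$ and then pass to the limit. So fix $u\in\mathcal{S}(\R^n)$ and put $f=(-\Delta)^{s/2}u$, so that $\widehat{f}(\omega)=|\omega|^{s}\widehat{u}(\omega)$, hence $\widehat{u}(\omega)=|\omega|^{-s}\widehat{f}(\omega)$. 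On the physical side this says $u=I_sf$, where $I_s$ is the Riesz potential of order $s$,
$$ I_sf(x)=\gamma(n,s)\int_{\R^n}\frac{f(y)}{|x-y|^{n-s}}\,dy, $$
$\gamma(n,s)$ being the usual normalizing constant; here $0<s<n/2<n$, so the kernel $|y|^{-(n-s)}$ is locally integrable and the identity $(-\Delta)^{-s/2}f=I_sf$ is the standard one (both sides have Fourier transform $|\omega|^{-s}\widehat{f}(\omega)$, and for Schwartz $u$ the function $f$ decays fast enough that $I_sf$ is well defined).

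Now $f=(-\Delta)^{s/2}u\in L^2(\R^n)$ with $\|f\|_{L^2}=[u]_{H^s}$. The Hardy--Littlewood--Sobolev inequality states that for $0<\alpha<n$ and exponents $1<p<r<\infty$ with $\frac{1}{r}=\frac{1}{p}-\frac{\alpha}{n}$ one has $\|I_\alpha f\|_{L^r}\le C(n,\alpha,p)\,\|f\|_{L^p}$. Apply it with $\alpha=s$ and $p=2$: the relation $\frac{1}{r}=\frac{1}{2}-\frac{s}{n}=\frac{n-2s}{2n}$ gives exactly $r=2^*=\frac{2n}{n-2s}$, and the constraints $1<p$, $p<r$, $r<\infty$ all hold since $0<s<n/2$. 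Hence $\|u\|_{L^{2^*}}=\|I_sf\|_{L^{2^*}}\le C\,\|f\|_{L^2}=C\,[u]_{H^s}$ for every $u\in\mathcal{S}(\R^n)$. For general $u\in H^s(\R^n)$, choose $u_k\in C_0^\infty(\R^n)$ with $u_k\to u$ in $H^s(\R^n)$; then $[u_k]_{H^s}\to[u]_{H^s}$ by the triangle inequality for the seminorm, and, passing to a subsequence with $u_k\to u$ a.e., Fatou's lemma gives $\|u\|_{L^{2^*}}\le\liminf_k\|u_k\|_{L^{2^*}}\le C\liminf_k[u_k]_{H^s}=C\,[u]_{H^s}$.

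The real content of the argument is the Hardy--Littlewood--Sobolev inequality, which I would simply invoke. If one wanted a self-contained proof, the delicate point is that the kernel $|y|^{-(n-s)}$ lies only in the \emph{weak} space $L^{n/(n-s),\infty}(\R^n)$, so the bound does not follow from Young's convolution inequality; one proves instead a weak-type $(2,2^*)$ estimate for $I_s$ by splitting the convolution at a radius optimized pointwise in $x$, and then upgrades it to the strong $L^2\to L^{2^*}$ bound by Marcinkiewicz interpolation. An alternative route, bypassing $I_s$ and HLS entirely, is the elementary dyadic estimate working directly with the double integral in \eqref{Gagliardo-seminorm}, as carried out in \cite{DPV}; it is more self-contained but considerably longer.
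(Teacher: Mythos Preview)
The paper does not actually prove this statement; it is quoted as a known result with a reference to \cite{DPV}, theorem 6.5. Your argument via the Riesz potential representation $u=I_s\bigl((-\Delta)^{s/2}u\bigr)$ and the Hardy--Littlewood--Sobolev inequality is correct and is one of the standard routes to this inequality. The alternative you mention at the end---the direct dyadic estimate on the Gagliardo double integral carried out in \cite{DPV}---is precisely what the paper invokes, so in that sense your closing remark already anticipates the paper's choice. There is nothing further to compare.
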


\begin{corollary}
Let $a \geq 0$ and $0 < s < \min(\frac{n}{2},1)$. Then
$$ H^{s}_{2,a}(\R^n) \subset  H^{s}(\R^n). $$
\label{coro-inL2}
\end{corollary}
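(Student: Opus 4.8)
The plan is to show that the identity on $C_0^\infty(\R^n)$ extends to a continuous injection $H^s_{2,a}(\R^n)\hookrightarrow H^s(\R^n)$, and for this it suffices to establish the norm bound
\[
\|u\|_{H^s(\R^n)}^2 = [u]_{H^s}^2 + \|u\|_{L^2(\R^n)}^2 \le C\,\|u\|_{H^s_{2,a}}^2 \qquad \text{for every } u\in C_0^\infty(\R^n),
\]
with $C=C(n,s)$. Since the Gagliardo seminorm term is common to both norms, everything reduces to estimating $\|u\|_{L^2(\R^n)}$ in terms of $[u]_{H^s}$ and $\|u\|_{L^2_a}$.

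First I would split the integral over the unit ball and its complement. On $\R^n\setminus B_1$ one has $|x|^a\ge 1$ since $a\ge 0$, hence $\int_{|x|\ge 1}|u|^2\,dx\le \int_{|x|\ge 1}|u|^2\,|x|^a\,dx\le \|u\|_{L^2_a}^2$. On $B_1$ the weight degenerates at the origin, so instead I would invoke the fractional Sobolev inequality quoted just above (available because $0<s<\min(n/2,1)$): $\|u\|_{L^{2^*}(\R^n)}\le C[u]_{H^s}$. Then Hölder's inequality on the bounded set $B_1$ gives $\|u\|_{L^2(B_1)}\le |B_1|^{\frac12-\frac1{2^*}}\,\|u\|_{L^{2^*}(B_1)}\le C[u]_{H^s}$, where the exponent $\frac12-\frac1{2^*}=\frac{s}{n}$ is positive. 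Adding the two contributions yields $\|u\|_{L^2(\R^n)}^2\le C[u]_{H^s}^2+\|u\|_{L^2_a}^2\le C\,\|u\|_{H^s_{2,a}}^2$, which is the desired estimate.

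Finally, since this inequality holds on the common dense subspace $C_0^\infty(\R^n)$ used to define both completions, the identity map extends to a bounded linear map $H^s_{2,a}(\R^n)\to H^s(\R^n)$; it is injective because the same local estimate, applied on an arbitrary ball $B_R$ with constant $|B_R|^{1/2-1/2^*}$, shows that Cauchy sequences for the $H^s_{2,a}$-norm converge in $L^2_{\mathrm{loc}}(\R^n)$, so both completions are realized as spaces of locally integrable functions and the extended map is genuinely the inclusion. I do not expect a real obstacle here; the only point worth noting is that the power weight $|x|^a$ with $a\ge 0$ controls the $L^2$-mass only away from the origin, and it is precisely the fractional Sobolev embedding that compensates for its degeneracy near $x=0$.
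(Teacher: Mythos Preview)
Your argument is correct and follows essentially the same route as the paper: both split the $L^2$ integral at $|x|=1$, control the exterior part by the weighted $L^2_a$ norm (using $|x|^a\ge 1$ for $a\ge 0$), and control the interior part via H\"older together with the fractional Sobolev inequality $\|u\|_{L^{2^*}}\le C[u]_{H^s}$. Your write-up is in fact more careful than the paper's, which contains minor slips (it states the target inequality as $\|u\|_{L^2}\le C[u]_{H^s}$ rather than $\le C\|u\|_{H^s_{2,a}}$, and writes $|x|^2$ where $|x|^a$ is meant), and you also address the injectivity of the extension to the completions, which the paper leaves implicit.
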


\begin{proof}
It suffices to show that
\be \| u \|_{L^2} \leq C [u]_{H^s} \quad \; \forall \; u \in
C_0^\infty(\R^n). \label{inL2} \ee
However,
$$ \int_{|x|> 1} u^2 \: dx \leq \int_{|x|> 1} |x|^a u^2 \: dx $$
and
$$ \int_{|x| \leq  1} u^2 \: dx \leq C \left( \int_{|x|\leq 1} u^{2^*} \: dx \right)^{2/2^*} 
\leq C [u]_{H^s}. $$
Inequality \eqref{inL2} follows.
\end{proof}

\begin{remark}
It is desirable to have a more concrete characterization of the functional space 
$H^s_{q,a,0}(\Omega)$. For this propose, we consider the fractional homogeneous 
Solev space
$$ \dot{H}^s(\R^n)= \{ u \in L^{2^*_s}(\R^n) : [u]_{H^s} < \infty \}.  $$
Then it is well known that $C_0^\infty(\R^n)$ is dense in $\dot{H}^s(\R^n)$ 
(This for instance the case $a=0$ of theorem 1.1 in \cite{DV}, which is established 
by the standard procedure of truncation and regularization by convolution with a 
standard molifier). Then,
$$ H^{s}_{q,a}(\R^n) = \dot{H}^s(\R^n) \cap L^q_a(\R^n) \; \hbox{if} \; 
-n < a < n(q-1)$$
(We need the restrictions on $a$ so that $|x|^a \in A_q$, the Muckenhoupt class of
 weights, and the regularization by convolution with a standard molifier works, see 
 for example lemma 1.5 in \cite{K}). Likewise, if $\Omega$ is a ball
$$ H^{s}_{q,a}(\Omega) = \{ u \in  \dot{H}^s(\R^n) \cap L^q_a(\R^n) : u = 0 \; \hbox{a.e} \; \hbox{in} 
\; \Omega^c \; \} \quad \hbox{for} \; 
-n < a < n(q-1). $$
For some other similar density results for fractional Sobolev spaces 
in domains see \cite{FSV}.
\label{remark-characterizations}
\end{remark}

\medskip

Since we are going to use variational methods, it is natural to interpret
our solutions in a weak sense:	

\begin{definition}
We say that $u \in H^s_{q,a,0}(\Omega)$ is a weak solution of
$$  \left\{
\begin{gathered}
(-\Delta)^{s} u + |x|^a |u|^{q-2}u  = |x|^b u^{p-2} u \quad \hbox{in} \;\Omega \\ 
u \equiv 0 \;  \hbox{in} \; \R^n-\Omega
\end{gathered}
\right.  
$$
provided that 
\begin{multline*}
C(n,s) \int_{\R^n} \int_{\R^n} \frac{(u(x)-u(y))}{|x-y|^{n+2s}} \cdot 
(\varphi(x)-\varphi(y)) \; dx \; dy + \int_{\Omega}
 |x|^a |u|^{q-2} u \; \varphi \; dx  \\=  \int_{\Omega} |x|^b |u|^{p-2} u \;
\varphi \; dx 
\end{multline*}
holds for any test function $\varphi \in C_0^\infty(\Omega)$.  
\label{definition-weak-solution}
\end{definition}

\medskip

\begin{remark}
It is important to notice that different notions of fractional Laplacian in
domains $\Omega \subset \R^n$, with different interpretations of the
Dirichlet boundary conditions, have been defined in the literature, which
should not be confused.

\begin{enumerate}
\item[i)] We may consider equations with the standard fractional Laplacian on $\R^n$ and require
that the solutions vanish outside $\Omega$. The associated \emph{Dirilect
form} is

$$ \mathcal{E}_{\R^n}(u,v) = \frac{C(n,s)}{2} \int_{\R^n} \int_{\R^n}
\frac{(u(x)-u(y))\cdot (v(x)-v(y))}{|x-y|^{n+2s}} \; dx \; dy. $$ 

In probabilistic terms, the associated stochastic process is the standard symmetric $\alpha$-stable
Lévy process (with $\alpha=2s$), killed upon living $\Omega$. This is the operator that
we consider in definition \ref{definition-weak-solution} and theorem \ref{theorem-symmetry-breaking}, and it has been also
used for instance in \cite{LL} (in even a more general form, called there
the fractional $p$-Laplacian).

\item[ii)] A second option is to consider the so-called regional fractional Laplacian,
with corresponds to the Dirichlet form

$$ \mathcal{E}_\Omega (u,v) = \frac{C(n,s)}{2} \int_{\Omega} \int_{\Omega}
\frac{(u(x)-u(y))\cdot (v(x)-v(y))}{|x-y|^{n+2s}} \; dx \; dy. $$ 

The associated stochastic process is the censored stable process, for which
jumps outside $\Omega$ are completely forbidden \cite{BBC}.

\item[iii)] Another approach is to consider the fractional powers of the Laplacian in
$\Omega$ with Dirichlet conditions, defined from the spectral decomposition. 
This coincides with the operator obtained from the Caffarelli-Silvestre
extension on a cylinder based in $\Omega$ (see \cite{BCDP}). The
associated stochastic process is the subordinate killed Brownian motion
studied in \cite{SV}.
\end{enumerate}

We don't know if the analogue of theorem \ref{theorem-symmetry-breaking}
holds for the fractional Laplacian in $\Omega$ in the sense of ii) or iii).
\end{remark}

\medskip

We shall need the following strong minimum principle for weak
supersolutions, which can be found in \cite{BF}. We start by a definition:

\begin{definition}
We say that $u \in H^{s,2}_0(\Omega)$ is a weak supersolution of
\be (-\Delta)^s u = 0 \; \hbox{in} \; \Omega \quad u \equiv 0 \; \hbox{in} \; \Omega - \R^n 
\label{equation-homog} \ee 
if  
$$ \int_{\R^n} \int_{\R^n} \frac{(u(x)-u(y))}{|x-y|^{n+2s}}\;
(\varphi(x)-\varphi(y)) \; dx \; dy \geq 0 $$
for all $\varphi \in C_0^\infty(\Omega)$, $\varphi \geq 0$.
\end{definition}

Then we may state the result:

\begin{theorem}[\cite{BF}, theorem A.1, case $p=2$]
Let $\Omega \subset \R^n$ be an open bounded set, which is connected. Let $s
\in (0,1)$ and $u \in H^{s}_0(\Omega)$ be a weak supersolution of \eqref{equation-homog}
in the sense of the previous definition  such that 
$u \geq 0$ in $\Omega$. Let us suppose that
$$ u \not \equiv 0  \; \hbox{in} \; \Omega. $$
Then $u>0$ almost everywhere in $\Omega$
\label{minimum-principle}.
\end{theorem}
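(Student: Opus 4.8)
This theorem is quoted essentially verbatim from \cite{BF}; here we only indicate the strategy of its proof. The plan is to derive it from a \emph{nonlocal weak Harnack inequality} for nonnegative supersolutions. The key simplifying observation is that, since $u \geq 0$ on all of $\R^n$ (it vanishes outside $\Omega$ and is nonnegative in $\Omega$), the ``tail'' contribution $\int_{\R^n\setminus B_r(x_0)} \frac{u_{-}(y)}{|y-x_0|^{n+2s}}\,dy$ (where $u_{-}=\max(-u,0)$ is the negative part), which usually appears as an additive remainder in Harnack-type estimates for the fractional Laplacian, vanishes identically. Consequently, for every ball with $B_{2r}(x_0)\subset\Omega$ one gets a clean inequality of the form
$$ \left( \frac{1}{|B_r(x_0)|}\int_{B_r(x_0)} u^{\varepsilon}\,dx \right)^{1/\varepsilon} \leq C \operatorname*{ess\,inf}_{B_r(x_0)} u , $$
with $\varepsilon=\varepsilon(n,s)>0$ and $C=C(n,s)>0$. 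This is obtained by the De Giorgi--Nash--Moser iteration adapted to the nonlocal setting (as carried out in \cite{BF}): a Caccioppoli inequality for the truncations $(u-k)_{-}$, a logarithmic estimate placing $\log u$ in a John--Nirenberg class on $B_r(x_0)$, and a Moser iteration on negative powers of $u$.

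The inequality above immediately yields a \emph{dichotomy}: for each ball with $B_{2r}(x_0)\subset\Omega$, either $u\equiv 0$ a.e.\ on $B_r(x_0)$, or $u>0$ a.e.\ on $B_r(x_0)$. Indeed, if the set $\{u>0\}$ has positive measure in $B_r(x_0)$, then the left-hand side is strictly positive, which forces the essential infimum over $B_r(x_0)$ to be positive.

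One then concludes by a connectedness argument. Let $G$ be the set of points $x_0\in\Omega$ for which there exists $r>0$ with $B_{2r}(x_0)\subset\Omega$ and $u>0$ a.e.\ on $B_r(x_0)$. By definition $G$ is open; and by the dichotomy its complement in $\Omega$ is open as well, since if $x_0\notin G$ then $u\equiv 0$ a.e.\ on every admissible ball around $x_0$. Moreover $G\neq\emptyset$: because $u\not\equiv 0$, the set $\{u>0\}$ has positive measure in $\Omega$, hence it admits a Lebesgue density point $x_0\in\Omega$, and then any sufficiently small ball around $x_0$ meets $\{u>0\}$ in a set of positive measure, so $x_0\in G$ by the dichotomy. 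Since $\Omega$ is connected, $G=\Omega$, i.e.\ $u>0$ almost everywhere in $\Omega$.

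The main technical obstacle is the weak Harnack inequality itself: its proof in the nonlocal setting requires carefully controlling the long-range interaction terms that arise in the Caccioppoli and logarithmic estimates for the singular kernel $|x-y|^{-n-2s}$ — a difficulty that is, however, greatly mitigated here by the global sign condition $u\geq 0$, which annihilates all tail contributions. The full details are in \cite{BF}.
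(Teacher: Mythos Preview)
The paper does not give any proof of this statement: it is simply quoted from \cite{BF} (Theorem~A.1, case $p=2$) as an external tool, with no argument supplied. Your proposal correctly recognizes this and goes further by sketching the underlying strategy---a nonlocal weak Harnack inequality combined with a connectedness/dichotomy argument---which is indeed the route taken in \cite{BF}; so there is nothing to compare against in the present paper, and your outline is both accurate and more informative than what appears here.
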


We shall need also the following elementary calculus lemma, whose proof is 
straightforward. 
\begin{lemma}
Consider the function $f:(0,\infty) \to \R$ 
$$ f(\lambda)=C_1 \; \lambda^e_1 + C_2 \; \lambda^{-e_2} $$
where $C_1,C_2,e_1,e_2>0$. Then $f$ archives its minimum at the point
$$ \lambda_0 = \left( \frac{ C_2 \; e_2 }{ C_1 \; e_1}\right)^{1/(e_1+e_2)}  $$
and
$$ f(\lambda_0)= C_1^{e_2/(e_1+e_2)} \; C_2^{e_1/(e_1+e_2)} \; k(e_1,e_2) $$
where $k(e_1,e_2)$ depends only on the exponents $k_1$ and $k_2$.
\label{minimization-lemma}
\end{lemma}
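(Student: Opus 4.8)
The plan is to proceed by elementary one-variable calculus. First I would note that $f$ is smooth on $(0,\infty)$ and coercive at both endpoints: $f(\lambda)\to+\infty$ as $\lambda\to 0^+$ (forced by the term $C_2\lambda^{-e_2}$, since $C_2,e_2>0$) and $f(\lambda)\to+\infty$ as $\lambda\to+\infty$ (forced by the term $C_1\lambda^{e_1}$, since $C_1,e_1>0$). Consequently $f$ attains its infimum over $(0,\infty)$ at some interior point, which must be a critical point.

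Next I would compute
\[
 f'(\lambda)=C_1 e_1\,\lambda^{e_1-1}-C_2 e_2\,\lambda^{-e_2-1},
\]
and solve $f'(\lambda)=0$. Multiplying through by $\lambda^{e_2+1}>0$ yields $C_1 e_1\,\lambda^{e_1+e_2}=C_2 e_2$, so the \emph{unique} critical point in $(0,\infty)$ is precisely
\[
 \lambda_0=\left(\frac{C_2\,e_2}{C_1\,e_1}\right)^{1/(e_1+e_2)}.
\]
Since $f$ attains its global minimum at an interior critical point and $\lambda_0$ is the only one, $\lambda_0$ is the minimizer. (Alternatively, one can observe that $\lambda\mapsto\lambda^{e_2+1}f'(\lambda)$ is strictly increasing, hence $f'$ changes sign from negative to positive at $\lambda_0$.)

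Finally I would substitute $\lambda_0$ back into $f$. Writing $\alpha=e_1/(e_1+e_2)$ and $\beta=e_2/(e_1+e_2)$, so that $\alpha+\beta=1$, a direct computation gives $C_1\lambda_0^{e_1}=C_1^{\beta}C_2^{\alpha}(e_2/e_1)^{\alpha}$ and $C_2\lambda_0^{-e_2}=C_1^{\beta}C_2^{\alpha}(e_1/e_2)^{\beta}$, whence
\[
 f(\lambda_0)=C_1^{e_2/(e_1+e_2)}\,C_2^{e_1/(e_1+e_2)}\,k(e_1,e_2),\qquad
 k(e_1,e_2)=\left(\frac{e_2}{e_1}\right)^{e_1/(e_1+e_2)}+\left(\frac{e_1}{e_2}\right)^{e_2/(e_1+e_2)},
\]
which depends only on $e_1$ and $e_2$, as claimed. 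There is no genuine obstacle in this argument; the only step that warrants an explicit word of justification is that the unique critical point is in fact a global minimum, and this is supplied by the coercivity of $f$ at both ends of $(0,\infty)$ established in the first step.
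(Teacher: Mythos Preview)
Your argument is correct and is exactly the elementary one-variable calculus computation the paper has in mind; indeed, the paper states the lemma as ``straightforward'' and omits the proof entirely. Your additional remark on coercivity at both endpoints cleanly justifies that the unique critical point is the global minimizer, and your explicit formula for $k(e_1,e_2)$ is a welcome bonus.
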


\section{A generalization of Strauss Radial Lemma}

\label{Strauss-section}

In this section, we shall prove a version of Strauss 
radial lemma for the space $H^s_{q,a,rad}(\R^n)$, generalizing lemma 2.2 in \cite{Sintzoff}. 

\begin{theorem}
Assume that $s>\frac{1}{2}$, and that
$-(n-1) \leq a < n(q-1)$. Define the exponents
$$
\theta= \theta(s,q)= \frac{2}{2sq+2-q} \quad (0 < \theta < 1)
$$
and
$$
\sigma = \theta \; \frac{n-1}{2}  +  (1-\theta) 
\; \frac{n-1+a}{q} \; =  \; \frac{2 \, a s + 2 \, n s - a - 2 \, s}{2 \, q s - q +
2}.
$$

% sage: latex(factor(theta*(n-1)/2+(1-theta)*(n-1+a)/q)) 

For any radial function $u \in H^{s}_{q,a,rad}(\R^n)$, we have that
\be |u(x)| \leq C(n,s,q,a) \;  |x|^{-\sigma} \; 
[u]_{H^s}^{\theta} \; 
 \| u \|_{L^q_a}^{1-\theta}.  \label{Strauss-ineq-generalized} \ee 
\label{Strauss-generalized}

\medskip

As a consequence, any function $u \in H^s_{q,a,rad}(\R^n)$ is equal a.e. to 
a continuous function in $\R^{n}-\{0\}$ and we have that

\be |u(x)| \leq C(n,s,q,a) \;  |x|^{-\sigma} \;  \| u \|_{H^s_{q,a}}.
\label{Strauss-ineq2} \ee

\end{theorem}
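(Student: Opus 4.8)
The plan is to prove \eqref{Strauss-ineq-generalized} first for radial $u\in C_0^\infty(\R^n)$; this suffices because radial smooth functions are dense in $H^s_{q,a,rad}(\R^n)$ — the radialization $u\mapsto\int_{SO(n)}u(R\,\cdot)\,dR$ maps $C_0^\infty$ to itself, is the identity on radial functions, and, by Jensen's inequality, increases neither $[u]_{H^s}$ nor $\|u\|_{L^q_a}$ (the weight $|x|^a$ being rotation invariant). The consequence — continuity off the origin and \eqref{Strauss-ineq2} — is then deduced from \eqref{Strauss-ineq-generalized} itself, as explained at the end.

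For a radial $u$ the Fourier transform $\widehat u$ is radial as well, and Fourier inversion for radial functions (a Hankel transform) together with the Bessel bound $|J_\nu(t)|\le C\min(t^\nu,t^{-1/2})$, $\nu=\tfrac{n-2}{2}$ (see \cite{DD6}), gives the pointwise estimate
\be
|u(x)|\ \le\ C_n\!\int_{|\omega|\le 1/|x|}\!\!|\widehat u(\omega)|\,d\omega\ +\ C_n\,|x|^{-(n-1)/2}\!\!\int_{|\omega|\ge 1/|x|}\!\!|\widehat u(\omega)|\,|\omega|^{-(n-1)/2}\,d\omega .
\label{sketch-Bessel}
\ee
Pick a parameter $N\ge 1/|x|$ (to be optimized) and split $u=u_{\le N}+u_{>N}$ with a smooth Littlewood--Paley cut-off at frequency $N$. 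Applying \eqref{sketch-Bessel} to $u_{>N}$ (whose spectrum lies in $\{|\omega|>N\}\subset\{|\omega|\ge 1/|x|\}$) kills the first term, and Cauchy--Schwarz on the second — pairing $|\widehat u(\omega)|\,|\omega|^{s}$ with $|\omega|^{-(n-1)/2-s}$, so that in polar coordinates the surface factor $\rho^{\,n-1}$ leaves exactly $\int_N^\infty\rho^{-2s}\,d\rho$ — gives
\be
|u_{>N}(x)|\ \le\ C\,|x|^{-(n-1)/2}\,N^{(1-2s)/2}\,[u]_{H^s}.
\label{sketch-high}
\ee
The finiteness of $\int_N^\infty\rho^{-2s}\,d\rho$, hence the whole argument, requires $s>1/2$; that is precisely where this hypothesis enters.

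For the low-frequency part $w:=u_{\le N}$ — which is radial and band-limited — I would run the classical radial argument of Strauss: differentiating $r^{n-1+a}|w(r)|^q$, integrating from $r=|x|$ to $\infty$ (where $r^{n-1+a}|w|^q\to 0$), and discarding the term carrying the factor $(n-1+a)\ge 0$ (this is where $a\ge-(n-1)$ is used), one gets
$$|x|^{n-1+a}\,|w(|x|)|^{q}\ \le\ C\!\int_{\R^n}\!|w|^{q-1}|\nabla w|\,|y|^a\,dy\ \le\ C\,\|w\|_{L^q_a}^{q-1}\,\|\nabla w\|_{L^q_a}$$
by Hölder. Since $a<n(q-1)$ (and $a>-n$), $|x|^a$ is a Muckenhoupt $A_q$ weight, so the Littlewood--Paley projection is bounded on $L^q_a$ (whence $\|w\|_{L^q_a}\le C\|u\|_{L^q_a}$) and the weighted Bernstein inequality $\|\nabla w\|_{L^q_a}\le C\,N\,\|w\|_{L^q_a}$ holds; therefore
\be
|u_{\le N}(x)|\ \le\ C\,|x|^{-(n-1+a)/q}\,N^{1/q}\,\|u\|_{L^q_a}.
\label{sketch-low}
\ee
Adding \eqref{sketch-high} and \eqref{sketch-low} and minimizing over $N>0$ via Lemma \ref{minimization-lemma} (with $e_1=1/q$, $e_2=(2s-1)/2$) produces $C\big(|x|^{-(n-1)/2}[u]_{H^s}\big)^{\theta}\big(|x|^{-(n-1+a)/q}\|u\|_{L^q_a}\big)^{1-\theta}$ with $\theta=e_2/(e_1+e_2)=\tfrac{2}{2sq-q+2}$, and the elementary identity $\theta\,\tfrac{2s-1}{2}=\tfrac{1-\theta}{q}$ makes the powers of $|x|$ collapse exactly to $|x|^{-\sigma}$, which is \eqref{Strauss-ineq-generalized}. (Should the minimizing $N$ fall below $1/|x|$, one instead uses that for every $N\le 1/|x|$ the high-frequency part is bounded by $C|x|^{-(n-2s)/2}[u]_{H^s}$ uniformly in $N$, lets $N\to0$ in \eqref{sketch-low}, and checks that this cruder bound already lies below the right-hand side of \eqref{Strauss-ineq-generalized} in that regime.)

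Finally, for the consequence: given $u\in H^s_{q,a,rad}(\R^n)$, choose radial $u_k\in C_0^\infty(\R^n)$ with $u_k\to u$ in $H^s_{q,a}$; applying \eqref{Strauss-ineq-generalized} to $u_j-u_k$ and using that $|x|^{-\sigma}$ is bounded on each annulus $\{\rho_0\le|x|\le\rho_1\}$ shows $(u_k)$ is uniformly Cauchy there, so it converges uniformly on compact subsets of $\R^n\setminus\{0\}$ to a continuous representative of $u$, and \eqref{Strauss-ineq-generalized} passes to the limit; \eqref{Strauss-ineq2} then follows by the weighted AM--GM inequality $[u]_{H^s}^{\theta}\|u\|_{L^q_a}^{1-\theta}\le\max\big([u]_{H^s},\|u\|_{L^q_a}\big)\le\|u\|_{H^s_{q,a}}$. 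I expect the real work to be the low-frequency estimate \eqref{sketch-low} — selecting the right machinery ($A_q$ weights, weighted Bernstein, the radial integration by parts) — together with the careful, if routine, bookkeeping of exponents and of the choice of $N$; the role of $s>1/2$, by contrast, is transparent.
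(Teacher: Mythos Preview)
Your argument is correct. The high-frequency estimate matches the paper's (Fourier inversion for radial functions, the bound $|J_\nu(t)|\le Ct^{-1/2}$, then Cauchy--Schwarz), but the low-frequency estimate takes a genuinely different route. The paper works purely on the space side: writing $l=u*\phi_t$ with $\phi\in\mathcal{S}$, it bounds $|l(x)|$ by $S_\gamma(u_t)(x/t)$ and invokes a pointwise weighted bound for $S_\gamma f(x)=\int f(y)(1+|x-y|^2)^{-\gamma/2}\,dy$ on radial functions (Lemma~\ref{S-gamma}, a special case of a Stein--Weiss type inequality from \cite{DL}), obtaining $|l(x)|\le C\,t^{-1/q}|x|^{-(n-1+a)/q}\|u\|_{L^q_a}$ directly, with no differentiation and no weighted multiplier theory. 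You instead run the classical Strauss integration by parts on $r^{n-1+a}|w|^q$ and then control $\|w\|_{L^q_a}$ and $\|\nabla w\|_{L^q_a}$ via Muckenhoupt $A_q$ machinery (boundedness of the frequency projection, weighted Bernstein). Both routes land on the same estimate, and both happen to require $a<n(q-1)$ --- you for the $A_q$ condition on $|x|^a$, the paper for the range hypothesis in Lemma~\ref{S-gamma}; your argument also makes the role of $a\ge-(n-1)$ explicit (the sign of the discarded boundary term), whereas in the paper it enters through the lower bound in Lemma~\ref{S-gamma}. Your approach is closer in spirit to the $s=1$ proof in \cite{Sintzoff}; the paper's is shorter once the black-box lemma is granted. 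One simplification worth noting: since $|J_\nu(t)|\le Ct^{-1/2}$ already holds for all $t>0$ when $n\ge2$, you may drop the two-term Bessel splitting and the constraint $N\ge1/|x|$ altogether; the high-frequency bound then holds for every $N>0$, the optimization over $N$ is unconstrained, and your parenthetical case analysis becomes unnecessary.
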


\medskip

For the proof we need the following lemmas.

\begin{lemma}[\cite{SW},theorem 3.3 of chapter IV]
Let $u \in L^1_{rad}(\R^n)$ be a radial function, $u(x)=u_{0}(|x|)$.
Then its Fourier transform $\widehat{u}$ is also radial, and it is given by 
$$ \widehat{u}(\omega) = (2\pi)^{n/2} |\omega|^{-\nu} \int_0^\infty u_{0}(r)
\; J_{\nu}(r|\omega|) \; r^{n/2}\; dr $$
where $\nu=\frac{n}{2} -1$ and $J_\nu$ denotes the Bessel function of
order $\nu$.
\label{radial-Fourier}
\end{lemma}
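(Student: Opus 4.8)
The statement to be proved is the Strauss-type inequality \eqref{Strauss-ineq-generalized} for radial functions in $H^s_{q,a,rad}(\R^n)$, together with its corollary \eqref{Strauss-ineq2}. The plan is to follow the harmonic-analysis strategy announced in the introduction: split a (smooth, radial, compactly supported) function $u$ into a low-frequency part $u_L$ and a high-frequency part $u_H$ at a scale $\rho>0$ to be optimized, using the Fourier multipliers $\widehat{u_L}=\chi_{\{|\omega|\le\rho\}}\widehat u$ and $\widehat{u_H}=\chi_{\{|\omega|>\rho\}}\widehat u$. For the high-frequency part one writes $u_H(x)=(2\pi)^{-n/2}\int_{|\omega|>\rho}\widehat u(\omega)e^{ix\cdot\omega}\,d\omega$, multiplies and divides by $|\omega|^{s}$, and applies Cauchy--Schwarz to get the pointwise bound $|u_H(x)|\le C\,\rho^{(n-2s)/2}[u]_{H^s}$ (valid since $s<n/2$, so $\int_{|\omega|>\rho}|\omega|^{-2s}\,d\omega$ converges at infinity... actually diverges; instead one uses $\int_{|\omega|\le\rho}$ for the low part and keeps the high part controlled differently — see below). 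It is cleaner to bound $u_L$ in $L^\infty$ by $\rho^{n/2}\|\widehat u\|_{L^2}$ via Cauchy--Schwarz, but that only involves $\|u\|_{L^2}$, not the weighted norm, so the genuinely new ingredient must come from exploiting radial symmetry and the power weight.

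\textbf{The radial reduction.} The key point where radiality enters is Lemma \ref{radial-Fourier}: for a radial $u$ the Fourier transform is $\widehat u(\omega)=(2\pi)^{n/2}|\omega|^{-\nu}\int_0^\infty u_0(r)J_\nu(r|\omega|)r^{n/2}\,dr$ with $\nu=n/2-1$. Using the asymptotics $J_\nu(t)=O(t^{-1/2})$ as $t\to\infty$ and $J_\nu(t)=O(t^\nu)$ as $t\to 0$, one obtains, for the low-frequency part evaluated at a point $x$ with $|x|=R$, a bound of the form $|u_L(x)|\le C\int_0^\rho|\widehat u_0(t)|\,t^{n-1}\,dt$ combined with a decay factor coming from the oscillatory integral in the radial inversion formula. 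Concretely, writing the inversion for radial functions and inserting the Bessel decay $|J_\nu(Rt)|\le C(Rt)^{-1/2}$ gives $|u_L(x)|\le C R^{-(n-1)/2}\int_0^\rho |\widehat u_0(t)|\,t^{(n-1)/2}\,dt$, and then Cauchy--Schwarz in $t$ against the measure $t^{n-1}\,dt$ yields $|u_L(x)|\le C R^{-(n-1)/2}\big(\int_0^\rho t^{n-2s-?}\,dt\big)^{1/2}[u]_{H^s}$ — but again this uses only $[u]_{H^s}$. The weighted norm $\|u\|_{L^q_a}$ must be brought in through the \emph{high-frequency} part: one estimates $u_H$ not pointwise via Fourier but by first controlling $u$ itself in $L^q_a$ and then using the smoothing of the high-pass filter, OR — more in the spirit of Strauss — one interpolates the two endpoint estimates $|u(x)|\le C_1 R^{-(n-1)/2}[u]_{H^s}$-type (high/smooth part) and $|u(x)|\le C_2 R^{-(n-1+a)/q}\|u\|_{L^q_a}$-type (from a Hölder/Hardy estimate on the tail of the radial integral, which is where the exponent $(n-1+a)/q$ and the constraint $-(n-1)\le a<n(q-1)$ come from). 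Optimizing the split parameter $\rho$ then produces the geometric mean with weights $\theta$ and $1-\theta$ and the exponent $\sigma=\theta\frac{n-1}{2}+(1-\theta)\frac{n-1+a}{q}$, matching the statement.

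\textbf{Carrying it out.} So the ordered steps are: (1) reduce to $u\in C_0^\infty(\R^n)$ radial by density, writing $u=u_0(|x|)$; (2) establish the ``Sobolev endpoint'' pointwise bound $|u(x)|\le C|x|^{-(n-1)/2}[u]_{H^s}$ for the smooth/high-regularity contribution, using Lemma \ref{radial-Fourier}, the decay of $J_\nu$, and Cauchy--Schwarz against $|\omega|^{2s}\,d\omega$ (this needs $s>1/2$ so the resulting radial integral $\int_0^\infty t^{-2s+ \text{(something)}}\,dt$ converges near $0$, or rather so that the continuity of $u$ off the origin is meaningful); (3) establish the ``Lebesgue endpoint'' bound $|u(x)|\le C|x|^{-(n-1+a)/q}\|u\|_{L^q_a}$ by a direct radial computation — monotone-rearrangement or fundamental-theorem-of-calculus argument on $u_0$, using $u_0(R)^q R^{n-1+a}\le \big(\text{tail integral of }|u_0|^q r^{n-1+a}\big) + (\text{derivative term controlled by }[u]_{H^s})$; (4) perform the low/high frequency split at scale $\rho$, estimate the low part by the bound from step (2)-type reasoning truncated to $|\omega|\le\rho$ (gaining a power of $\rho$) and the high part by step (3)-type reasoning (losing a complementary power of $\rho$), getting $|u(x)|\le C|x|^{-(n-1)/2}\rho^{\alpha}[u]_{H^s}+C|x|^{-(n-1+a)/q}\rho^{-\beta}\|u\|_{L^q_a}$; (5) optimize over $\rho$ via Lemma \ref{minimization-lemma}, which delivers exactly \eqref{Strauss-ineq-generalized} with the stated $\theta$ and $\sigma$; (6) deduce \eqref{Strauss-ineq2} by Young's inequality $A^\theta B^{1-\theta}\le A+B\le \sqrt2\,(A^2+B^2)^{1/2}=\sqrt2\,\|u\|_{H^s_{q,a}}$, and deduce continuity off the origin since the same split, applied to differences $u(x)-u(x')$, gives uniform bounds on every annulus — i.e. the high part is Lipschitz with a bound depending on $\rho$ and the low part has small oscillation for $\rho$ large. \textbf{The main obstacle} I expect is step (3) together with pinning down the precise admissible range $-(n-1)\le a<n(q-1)$ and the correct homogeneity exponent $(n-1+a)/q$: getting a clean weighted pointwise bound on a radial function purely in terms of $\|u\|_{L^q_a}$ (without any derivative) requires exploiting that $u_0(R)\to 0$ as $R\to\infty$ faster than the weight allows mass to escape, and the borderline behavior at $a=-(n-1)$ and $a=n(q-1)$ is exactly where this breaks; making the two endpoint estimates dimensionally compatible so that the interpolation in step (4)--(5) closes is the delicate bookkeeping.
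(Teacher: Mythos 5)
Your proposal does not prove the statement it was assigned. The statement is Lemma \ref{radial-Fourier}: for a radial $u\in L^1(\R^n)$ the Fourier transform is again radial and is given by the Fourier--Bessel (Hankel-type) formula
$\widehat u(\omega)=(2\pi)^{n/2}|\omega|^{-\nu}\int_0^\infty u_0(r)J_\nu(r|\omega|)r^{n/2}\,dr$ with $\nu=\frac n2-1$. What you have written is instead a sketch of the proof of Theorem \ref{Strauss-generalized} (the generalized Strauss inequality); indeed you \emph{invoke} Lemma \ref{radial-Fourier} as a known tool in your ``radial reduction'' step, so your argument is circular as a proof of the lemma itself. In the paper this lemma carries no proof at all --- it is quoted from Stein and Weiss --- so the task was to supply the classical argument, not to prove the downstream inequality.

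The actual proof you should have given is short: write $\widehat u(\omega)=(2\pi)^{-n/2}\int_{\R^n}u_0(|x|)e^{-ix\cdot\omega}\,dx$ (with whatever normalization matches the paper's convention), pass to polar coordinates $x=r\theta$, and reduce to computing the spherical average
$\int_{S^{n-1}}e^{-ir\rho\,\theta\cdot\theta'}\,d\sigma(\theta')$ for $\rho=|\omega|$, which by rotation invariance depends only on $r\rho$; this already shows $\widehat u$ is radial. One then identifies this average with $(2\pi)^{n/2}(r\rho)^{-\nu}J_\nu(r\rho)$ using the Poisson/Bessel integral representation $J_\nu(t)=\frac{(t/2)^\nu}{\Gamma(\nu+\frac12)\Gamma(\frac12)}\int_{-1}^1e^{its}(1-s^2)^{\nu-\frac12}\,ds$ together with the formula for the surface measure of $S^{n-2}$-slices of $S^{n-1}$. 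Fubini (justified since $u\in L^1$) then yields the stated formula. None of these steps appears in your write-up, so the proof of the lemma is entirely missing.
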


\begin{remark}
If $u$ is a radial function, in particular is even ($u(-x)=u(x)$). It
follows than the inverse Fourier transform of $u$, coincides with the Fourier
transform. In other words, for radial functions $u \in L^1_{rad}(R^n)$, we can write Fourier
inversion formula as
\be u(x) = (2\pi)^{n/2} |x|^{-\nu} \int_0^\infty (\widehat{u})_{0}(r)
\; J_{\nu}(r|x|) \; r^{n/2}\; dr \; \label{inversion-radial} \ee 
\end{remark}

\medskip

\begin{lemma}[Assymptotics of Bessel functions, \cite{SW}, lemma 3.1 of
chapter IV]
If $\lambda>-1/2$, then
\be |J_\lambda(r)| \leq C \; r^{-1/2} \label{Bessel-bound}. \ee
\label{lemma-Bessel}
\end{lemma}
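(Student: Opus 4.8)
The plan is to establish the bound $|J_\lambda(r)| \le C\, r^{-1/2}$ separately for small and for large argument, since the two regimes genuinely require different tools; the result is classical (it is exactly the content cited from \cite{SW}), so the goal is to recall the standard argument. Fix $\lambda > -1/2$ and split at $r = 1$.

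\emph{Small argument.} For $0 < r \le 1$ I would use the power series
\[ J_\lambda(r) = \left(\frac{r}{2}\right)^{\lambda} \sum_{k=0}^{\infty} \frac{(-1)^k}{k!\,\Gamma(\lambda + k + 1)}\left(\frac{r}{2}\right)^{2k}, \]
whose series factor is entire in $r$, hence bounded on $[0,1]$; this gives $|J_\lambda(r)| \le C(\lambda)\, r^{\lambda}$ there. Since $\lambda > -1/2$ one has $r^{\lambda} \le r^{-1/2}$ on $(0,1]$, so $|J_\lambda(r)| \le C(\lambda)\, r^{-1/2}$ on $(0,1]$. This is precisely the step where the hypothesis $\lambda > -1/2$ is used: for $\lambda < -1/2$ the factor $r^{\lambda}$ near the origin already grows faster than $r^{-1/2}$ and the inequality fails.

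\emph{Large argument.} For $r \ge 1$ I would invoke a standard integral representation of the Bessel function, valid for every $\lambda$,
\[ J_\lambda(r) = \frac{1}{\pi} \int_0^\pi \cos(\lambda\theta - r\sin\theta)\, d\theta \;-\; \frac{\sin(\lambda\pi)}{\pi}\int_0^\infty e^{-\lambda t - r\sinh t}\, dt. \]
The second integral is $O(1/r)$: splitting at $t = 1$, on $[0,1]$ one has $\sinh t \ge t$, so the integrand is $\le e^{-(r-|\lambda|)t}$, giving a contribution $\le (r-|\lambda|)^{-1}$, while on $[1,\infty)$ the extra exponential decay of $\sinh t$ gives an even smaller term. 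For the first integral, the phase $\theta \mapsto \sin\theta$ has a single nondegenerate stationary point at $\theta = \pi/2$; localizing the integral to an interval of width of order $r^{-1/2}$ around $\pi/2$ and using $\sin\theta \approx 1 - (\theta - \pi/2)^2/2$ produces a contribution of size $C\, r^{-1/2}$ (Gaussian/stationary-phase estimate), whereas on the complement one integrates by parts once against $d(\cos(\lambda\theta - r\sin\theta))$ to gain a factor $1/r$, the boundary terms being controlled since $\cos\theta$ is bounded away from $0$ there. Altogether $|J_\lambda(r)| \le C\, r^{-1/2}$ for $r \ge 1$, and combining the two ranges proves the lemma.

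The only delicate point is this large-argument estimate for $\frac{1}{\pi}\int_0^\pi \cos(\lambda\theta - r\sin\theta)\,d\theta$: one must match the stationary-phase contribution at $\theta = \pi/2$ — which produces the exact $r^{-1/2}$ rate and is sharp — with the integration-by-parts estimate away from it, choosing the neighbourhood of $\pi/2$ of width $\sim r^{-1/2}$ so that both pieces are $O(r^{-1/2})$. An equivalent route is to write $J_\lambda = \tfrac12(H^{(1)}_\lambda + H^{(2)}_\lambda)$ and use the classical steepest-descent asymptotics of the Hankel functions, which give $J_\lambda(r) = \sqrt{2/(\pi r)}\,\cos(r - \lambda\pi/2 - \pi/4) + O(r^{-3/2})$ for $r \ge 1$; either way the essential input is the same stationary-phase/steepest-descent computation.
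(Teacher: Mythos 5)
The paper does not prove this lemma at all: it is quoted as a known result from Stein and Weiss (\cite{SW}, Ch.~IV, Lemma 3.1), so there is no internal argument to measure yours against. Your sketch is the standard proof and is essentially correct: the power series gives $|J_\lambda(r)|\le C(\lambda)\,r^{\lambda}\le C(\lambda)\,r^{-1/2}$ on $(0,1]$ (and this is indeed the only place the hypothesis $\lambda>-1/2$ is genuinely needed), while Schl\"afli's representation plus stationary phase gives the decay for large $r$. Two small imprecisions are worth repairing if you write the large-$r$ part out in full. First, the bound $(r-|\lambda|)^{-1}$ for the $\sinh$-integral presupposes $r>|\lambda|$; for $1\le r\le 2|\lambda|$, say, one should simply invoke continuity of $J_\lambda$, which is harmless since the constant may depend on $\lambda$. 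Second, on the complement of the $r^{-1/2}$-neighbourhood of $\pi/2$ the quantity $\cos\theta$ is \emph{not} bounded away from $0$ (it can be as small as $\sim r^{-1/2}$ near the cutoff), so one integration by parts against $\lambda-r\cos\theta$ gains only a factor $r^{-1/2}$ rather than $1/r$; that is still exactly what is needed, but the justification as stated is off. For comparison, the route actually taken in \cite{SW} is different and somewhat shorter: they define $J_\lambda$ for $\lambda>-1/2$ via Poisson's representation
$J_\lambda(r)=\frac{(r/2)^{\lambda}}{\Gamma(\lambda+\frac12)\Gamma(\frac12)}\int_{-1}^{1}e^{irs}(1-s^2)^{\lambda-\frac12}\,ds$,
and obtain the $r^{-1/2}$ bound by handling the endpoint singularities of $(1-s^2)^{\lambda-\frac12}$ directly (splitting off $1-|s|\le 1/r$ and integrating by parts on the rest), the hypothesis $\lambda>-1/2$ being what makes the representation converge. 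Your stationary-phase argument buys the sharp oscillatory asymptotics as a by-product, at the cost of a separate small-$r$ discussion; the Poisson-representation argument is more self-contained for this particular bound.
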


\begin{lemma}
Let $\gamma>n-\frac{1}{p}$ and consider the operator
$$ S_\gamma f(x)= \int_{\R^n} \frac{f(y)}{(1+|x-y|^2)^{\gamma/2}} \; dy. $$
Then if $-\frac{n-1}{p}<\alpha<\frac{n}{p^\prime}$, there exists $C>0$ such that
for any radial function $f$,
$$ |S_\gamma f(x)| \leq C |x|^{-(n-1)/p-\alpha} \; \| |x|^\alpha f \|_{L^p}. $$
\label{S-gamma}
\end{lemma}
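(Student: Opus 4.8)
The plan is to use the radial structure of $f$ to collapse the $n$–dimensional integral defining $S_\gamma f$ to a one–dimensional one, then apply H\"older's inequality in the radial variable. Write $f(y)=f_0(|y|)$, set $\rho=|x|$, fix a unit vector $e$, and pass to polar coordinates:
$$
S_\gamma f(x)=\int_0^\infty f_0(r)\,r^{n-1}\,K(\rho,r)\,dr,\qquad
K(\rho,r)=\int_{S^{n-1}}\frac{d\sigma(\omega)}{\bigl(1+\rho^2+r^2-2\rho r\langle e,\omega\rangle\bigr)^{\gamma/2}},
$$
where $K$ depends only on $\rho$ and $r$ by rotational invariance. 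Writing $r^{n-1}=r^{(n-1)/p}r^{(n-1)/p'}$ and inserting $r^{\alpha}r^{-\alpha}$, H\"older's inequality yields
$$
|S_\gamma f(x)|\le\Bigl(\int_0^\infty |f_0(r)|^p r^{\alpha p}r^{n-1}\,dr\Bigr)^{1/p}\Bigl(\int_0^\infty r^{\,n-1-\alpha p'}K(\rho,r)^{p'}\,dr\Bigr)^{1/p'},
$$
whose first factor is a constant multiple of $\||x|^\alpha f\|_{L^p}$. Since $(n-1)(p'-1)=(n-1)p'/p$, the lemma follows once we establish the one–variable estimate
$$
\int_0^\infty r^{\,n-1-\alpha p'}K(\rho,r)^{p'}\,dr\le C\,\rho^{-(n-1)(p'-1)-\alpha p'}.
$$

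The core of the argument is a pointwise bound on $K$. Writing $d^2=1+(\rho-r)^2+2\rho r(1-\cos\theta)$ and $d\sigma\sim\sin^{n-2}\theta\,d\theta$, I would argue as follows. If $r\le\rho/2$ or $r\ge 2\rho$, then $d^2\gtrsim 1+\max(\rho,r)^2$ for every $\theta$, so $K(\rho,r)\lesssim(1+\max(\rho,r))^{-\gamma}$. In the remaining (and delicate) regime $\rho/2\le r\le 2\rho$ one has $\rho r\approx\rho^2$; the part of the angular integral with $\theta$ bounded away from $0$ again contributes $\lesssim(1+\rho)^{-\gamma}$, while for small $\theta$ the substitution $u=\rho\theta$ transforms $\int_0^\delta\theta^{n-2}\bigl(1+(\rho-r)^2+\rho^2\theta^2\bigr)^{-\gamma/2}\,d\theta$ into $\rho^{-(n-1)}\int_0^{\rho\delta}u^{n-2}\bigl(1+(\rho-r)^2+u^2\bigr)^{-\gamma/2}\,du$; rescaling $u$ by $(1+(\rho-r)^2)^{1/2}$ bounds this by $C\,\rho^{-(n-1)}(1+|\rho-r|)^{\,n-1-\gamma}$ with $C=\int_0^\infty u^{n-2}(1+u^2)^{-\gamma/2}\,du$, which is finite precisely because $\gamma>n-\tfrac1p\ge n-1$. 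Hence
$$
K(\rho,r)\lesssim
\begin{cases}
(1+\max(\rho,r))^{-\gamma},& r\le\rho/2\ \text{or}\ r\ge 2\rho,\\[4pt]
(1+\rho)^{-(n-1)}(1+|\rho-r|)^{-(\gamma-n+1)},& \rho/2\le r\le 2\rho.
\end{cases}
$$

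It then remains to insert these bounds into $\int_0^\infty r^{\,n-1-\alpha p'}K(\rho,r)^{p'}\,dr$, split at $r=\rho/2$ and $r=2\rho$, and integrate powers. The hypothesis $\alpha<n/p'$ makes the integrand integrable near $r=0$; the hypothesis $\gamma>n-\tfrac1p$ gives $(\gamma-n+1)p'>1$, so that $\int_0^{\rho}(1+t)^{-(\gamma-n+1)p'}\,dt$ stays bounded uniformly in $\rho$, and together with $\alpha>-(n-1)/p$ it gives $\gamma+\alpha>n/p'$, ensuring convergence at $r=\infty$. A direct computation then shows that the near-diagonal piece $r\in(\rho/2,2\rho)$ produces exactly $\rho^{\,n-1-\alpha p'-(n-1)p'}=\rho^{-(n-1)(p'-1)-\alpha p'}$ for large $\rho$, while the two tail pieces decay at least as fast, the slack coming from the strict inequality $\gamma>n-\tfrac1p$; the range $\rho\lesssim 1$ is even easier since the asserted bound blows up there. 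Taking the $1/p'$–th power produces the factor $|x|^{-(n-1)/p-\alpha}$ and finishes the proof. The one genuinely non-routine step is the near-diagonal estimate on $K$: choosing the scaling $u=\rho\theta$ and recognizing that its success rests exactly on $\gamma>n-1$.
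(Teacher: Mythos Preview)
Your argument is correct. The reduction to the one–dimensional kernel $K(\rho,r)$, the H\"older step, the two–regime pointwise bound on $K$, and the final splitting of $\int r^{n-1-\alpha p'}K^{p'}\,dr$ all check out; in particular you correctly identify that $\alpha<n/p'$ controls the origin, that $(\gamma-n+1)p'>1$ (equivalent to $\gamma>n-1/p$) makes the near-diagonal integral uniformly bounded, and that $\gamma+\alpha>n/p'$ (which follows from the two hypotheses combined) gives convergence at infinity. The near-diagonal bound $K\lesssim(1+\rho)^{-(n-1)}(1+|\rho-r|)^{-(\gamma-n+1)}$ is the heart of the matter and your scaling $u=\rho\theta$ produces it cleanly.

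The paper, however, does not prove the lemma at all: it simply records that the statement is the special case $\beta=-(n-1)/p-\alpha$, $q=\infty$, $\tilde p=\tilde q=p$ of Lemma~2.3 in D'Ancona--Luc\`a, \emph{Stein--Weiss and Caffarelli--Kohn--Nirenberg inequalities with angular integrability}. So your route is genuinely different: you give a self-contained proof from first principles, whereas the paper outsources the work to a more general weighted estimate for convolution with radially decreasing kernels. Your approach has the advantage of being elementary and of making transparent exactly where each hypothesis enters (the condition $\gamma>n-1/p$ is used twice, once for the angular integral defining $K$ to converge and once for the $t$–integral near the diagonal); the paper's approach is shorter on the page but hides the mechanism inside the cited reference.
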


\begin{proof}
This is a special case $\beta=-\frac{n-1}{p}-\alpha$, $q=\infty$ and 
$\tilde{p}=\tilde{q}=p$ of lemma 2.3 in \cite{DL}. 
\end{proof}

Now we proceed to the proof of theorem \ref{Strauss-generalized}.

\begin{proof}
Let $u \in C_0^\infty(\R^n)$. We choose a radial function 
$\psi \in C_0^\infty(\R^n)$ such that $\psi \equiv 1 $ in the ball
$B(0,1)$. Moreover, we consider $\phi=\widehat{\psi}$ which will be a radial function
in the Schwartz class $\mathcal{S}(\R^n)$, and we use it to split $u$ into low 
and high frequency parts, as
$$ u(x)=h(x)+l(x) $$
where $l(x)=(u*\phi_t)(x)$ with $\phi_t(x)=t^{-n} \phi(x/t)$. We call $l$
the low  frequency part of $u$, since its Fourier transform
$$
\widehat{l }(\omega)= \widehat{u}(\omega) \psi(t\omega) 
$$
is supported in the ball $B(0,1/t)$. In a similar way,
\be \widehat{h}(\omega)= \widehat{u}(\omega) (1-\psi(t\omega) )
\label{h-transform} \ee
is supported in the region $|\omega|\geq 1/t$.

\medskip

For the high frequency part, $h$ we use an estimate on the frequency side. Indeed, writing the 
Fourier inversion formula \eqref{inversion-radial} for $h$, 
and using the bound \eqref{Bessel-bound} for the Bessel
functions, we get that: 
\be |h(x)| \leq C_n \; |x|^{-(n-1)/2} \int_0^\infty
|(\widehat{h})_{0}(r)| \; r^{(n-1)/2} \; dr.  \ee

Hence, using the Cauchy-Schwarz inequality and \eqref{h-transform}, we see that
\begin{align*}
|h(x)|&  \leq C_n \; |x|^{-(n-1)/2} \left( \int_0^\infty
|(\widehat{u})_{0}(r)|^2 \; r^{2s} r^{n-1} \; dr  \right)^{1/2}
\left( \int_0^\infty \left| 1-\psi_{0}(tr) \right|^2 r^{-2s} \; dr
\right)^{1/2}\\
% we now use what we know about the support of \widehat{\phi}
&  \leq C_n \; |x|^{-(n-1)/2}  \left( \int_{\R^n} |\widehat{u}(\omega)|^2 
|\omega|^{2s} \; d\omega \right)^{1/2}
\left( \int_{1/t}^\infty \left|1- \psi_{0}(tr) \right|^2 r^{-2s} \; dr
\right)^{1/2}\\
&\leq C_n \; |x|^{-(n-1)/2} \; t^{s-1/2}  \; [u]_{H^s} \;
\left( \int_{1}^\infty \left| 1-\psi_{0}(z) \right|^2 z^{-2s} \; dz
\right)^{1/2}\\
&\leq C \; |x|^{-(n-1)/2} \; t^{s-1/2} \; [u]_{H^s}  
\end{align*}
as the last integral is finite, since $s>1/2$ by hypothesis.

\medskip

Now we consider the low frequency part, and we use an estimate on the space
side. We fix $\gamma > (n -1+a)/q$. Then, since $\phi \in 
\mathcal{S}(\R^n)$, there exist $C>0$ such that 
$$ |\phi(x)| \leq C (1+|x|^2)^{-\gamma/2}. $$
Then we estimate $l$ as follows,
\begin{align*}
|l(x)| & \leq |u(x)| * |\phi_t(x)| \leq C  \int_{\R^n}
\frac{1}{t^n} \frac{|u(y)|}{\left(1+\left|\frac{x-y}{t}\right|\right)^{\gamma/2}} \; dy \\
& \leq C \int_{\R^n} \frac{|u(tz)|}{\left(1+\left| \frac{x}{t}-z \right|^2 \; dz
\right)^{\gamma/2}} \;  \; dz \quad (y=tz) \\
& \leq  C  S_\gamma(u_t)\left(\frac{x}{t}\right) \quad \hbox{where} \; u_t(z)=u(tz) \\
& \leq C \; \left|\frac{x}{t}\right|^{-(n-1+a)/q} \; \left( \int_{\R^n} 
|u(tz)|^q \; |z|^{q}
\; dz \right)^{1/q} \; \quad \hbox{by lemma} \; \ref{S-gamma} \;
(\hbox{with} \; p=q, \alpha=a/q ) \\
& \leq C \; \; t^{(n-1+a)/q} \; \left|x \right|^{-(n-1+a)/q}  \left( \int_{\R^n} 
|u(y)|^q \; \left|\frac{y}{t}\right|^{q} \; \frac{dy}{t^n}
\right)^{1/q}\\
& \leq C \; t^{-1/q} \;  \left|x \right|^{-(n-1+a)/q} \; \| u \|_{L^q_a}.
\end{align*}
Therefore, collecting our estimates, we have that
\begin{align*}
|u(x)| &\leq |h(x)|+|l(x)| \\
& \leq
 C \;  \left[\; |x|^{-(n-1)/2} \; t^{s-1/2}  \; [u]_{H^s} +  \;  
|x|^{-(n-1+a)/q} \;  t^{-1/q} \; \| u \|_{L^q_a} \right].
\end{align*}

We choose $t$ in order to minimize the right hand side of this expression, using 
lemma \ref{minimization-lemma}, with 
$$ e_1 = s-1/2, $$
$$ e_2 = 1/q, $$
$$ C_1 =  |x|^{-(n-1)/2} \; [u]_{H^s}, $$
$$ C_2 = |x|^{-(n-1+a)/q} \; \| u \|_{L^q_a}. $$

\noindent We obtain  \eqref{Strauss-ineq-generalized}, and \eqref{Strauss-ineq2} follows
immediately.

\medskip

Now let $u \in H^s_{q,a,rad}(\R^n)$ and consider a sequence of radial functions $(u_k)$ in
$C_0^\infty(\R^n)$ that $u_k \to u$ in $H^s_{q,a,rad}(\R^n)$. Then, 
$$ | u_k(x) - u_j(x) | \leq C \; |x|^{-\sigma} \;  \| u_k - u_j \|_{H^s_{q,a}} $$
It follows that $(u_n)$ converges uniformly on compact subsets of
$\R^{n}-\{0\}$ 
to a continuous function, which can be taken as a continuous representative
of $u$, and which satisfies inequality \eqref{Strauss-ineq-generalized}. 
\end{proof}

\begin{remark}
When $q=2$ and $a=0$, we get the Strauss inequality for the usual fractional
Sobolev space $H^s(\R^n)$ (for $s>\frac{1}{2}$ )
\be |u(x)| \leq C(n,s) \;  |x|^{-(n-1)/2} \; 
[u]_{H^s}^{\frac{1}{2s}} \;  \| u
\|_{L^2}^{1-\frac{1}{2s}}  \label{Strauss-fraccionario-1} \ee
and as a consequence
\be |u(x)| \leq C(n,s) \;  |x|^{-(n-1)/2} \; \| u \|_{H^s(\R^n)}. 
\label{Strauss-fraccionario-2} \ee

\;

Indeed in this particular case, the above proof can be simplified. We can take 
$\psi(x)$ as the characteristic function of the unit ball (which is not 
$C^\infty$, but we don't need it for this alternative proof), and estimate 
the low frequency part $l$ on the Fourier side, by using lemma \ref{radial-Fourier}, lemma
\ref{lemma-Bessel} and  Plancherel theorem (as we did before for $h$).

\begin{align*}
|l(x)| & \leq C_n \; |x|^{-(n-1)/2} \; \int_0^\infty
|(\widehat{l})_{0}(r)| \; r^{(n-1)/2} \; dr \\
&  \leq C_n \; |x|^{-(n-1)/2} \;  \int_0^{1/t}
|(\widehat{u})_{0}(r)| \; r^{(n-1)/2} \; dr \\
&  \leq C_n \; |x|^{-(n-1)/2} \; t^{-1/2} \;  \left( \int_0^{1/t}
|(\widehat{u})_{0}(r)|^2 \; r^{n-1} \; dr \right)^{1/2} \\  
&  \leq C_n \; |x|^{-(n-1)/2}  \;  t^{-1/2} \| \widehat{u} \|_{L^2} \\ 
&  \leq C_n \; |x|^{-(n-1)/2}  \; t^{-1/2} \| u \|_{L^2} 
\end{align*}
 
and we have arrived to the same estimate for $l$ as before, but without
using lemma \ref{S-gamma}. This is essentially the argument that 
we have used in \cite{DD6} to derive \eqref{Strauss-fraccionario-1}.  
A technique similar to the ours, has been used by Y. Cho and T. Ozawa \cite{CO} 
to derive various related Sobolev inequalities with symmetries. 
However, this simpler approach does not work in the general case of theorem \ref{Strauss-generalized}
(as Plancherel theorem is not available for the weighted $L^q$-norm).

\medskip 

The inequality \eqref{Strauss-fraccionario-2} is a particular case of the
results of  W. Sickel and L. Skrzypczak \cite{SiSk}, who proved a version of 
Strauss lemma for Besov and Tribel-Lizorkin spaces, since $H^s(\R^n)$
coincides with the Tribel-Lizorkin space $F^s_{p,2}(\R^n)$. However, the
above proof is much simpler than the proof in \cite{SiSk}, which is based on
an atomic decomposition.

\end{remark}

\begin{remark}
When $s=1$, $H^{s}_{a,q}(\R^n)$ coincides with the space $X$ considered by 
Sintzoff \cite{Sintzoff}, and as said before,  our result (partially) extends lemma 2.2 in
\cite{Sintzoff} to the
fractional case $s>1/2$. 

\medskip

However, Sintzoff's proof (which is based like the
original proof in \cite{Strauss} in an integration by parts argument), does
not have the restriction $a<(n-1)q$ (that comes from the use of lemma 
\ref{S-gamma}). We conjecture that this restriction is not necessary for
theorem \ref{Strauss-generalized} to hold, but we have not been able to remove it.
 
\end{remark}

\section{H\"older continuity estimates}

\label{Holder-continuity-section}

In this section, we show that the same kind of estimates in the proof of theorem 
\ref{Strauss-generalized}, can be used to obtain local H\"older continuity 
of the functions in $H^s_{q,a,rad}(\R^n)$. This fact will be useful later
for proving compactness of the embedding of this space into weighted $L^p$ spaces. 

\begin{theorem}
Let $\Omega_\varepsilon = \{ x \in \R^n: |x|\geq \varepsilon\}$. Then the continuous representative of
a function $u\in H^s_{q,a,rad}(\R^n)$ is H\"older  continuous in
$\Omega_\varepsilon$, and moreover there exists a constant $C_\varepsilon>0$
such that
\be |u(x_1)-u(x_2)| \leq C_\varepsilon \;  |x_1 - x_2|^\alpha \; 
\| u \|_{H^s_{q,a}}(\R^n) \label{Holder-continuity-equation} \ee
with
\be  \alpha=\frac{s-1/2}{s+1/q-1/2} \label{def-alpha}. \ee
\label{Holder-continuity} 
\end{theorem}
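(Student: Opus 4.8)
The plan is to mimic the splitting argument from the proof of Theorem~\ref{Strauss-generalized}, but now estimating the \emph{difference} $u(x_1)-u(x_2)$ rather than the pointwise value $u(x)$. As before, write $u = h + l$ with $l = u*\phi_t$ the low-frequency part and $h$ the high-frequency part, where $\phi_t(x)=t^{-n}\phi(x/t)$ and $\phi=\widehat\psi\in\mathcal S(\R^n)$. The idea is that for points $x_1,x_2$ in the fixed region $\Omega_\varepsilon$ the two parts are controlled by different mechanisms: the high-frequency part $h$ already enjoys the decay estimate $|h(x)|\le C\,\varepsilon^{-(n-1)/2}\,t^{s-1/2}[u]_{H^s}$ from the previous section (restricting $|x|\ge\varepsilon$), so its oscillation is at worst twice that bound, i.e. $|h(x_1)-h(x_2)|\le C_\varepsilon\,t^{s-1/2}\|u\|_{H^s_{q,a}}$; and separately $l$ is smooth, so its oscillation is controlled by $|x_1-x_2|$ times a bound on $\nabla l$ which will scale like a negative power of $t$.

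\medskip
First I would make the high-frequency bound precise: since $\widehat h$ is supported in $|\omega|\ge 1/t$, the computation in the proof of Theorem~\ref{Strauss-generalized} (Fourier inversion via Lemma~\ref{radial-Fourier}, the Bessel bound \eqref{Bessel-bound}, Cauchy--Schwarz against $|\omega|^{2s}$, and $s>1/2$) gives, for $|x|\ge\varepsilon$,
\be
|h(x)| \le C_\varepsilon \; t^{\,s-1/2}\; [u]_{H^s},
\ee
hence $|h(x_1)-h(x_2)|\le 2C_\varepsilon\, t^{s-1/2}\,[u]_{H^s}$. Second, for the low-frequency part I would differentiate under the convolution: $\nabla l = u * \nabla\phi_t$, and $\nabla\phi_t(x) = t^{-n-1}(\nabla\phi)(x/t)$. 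Since $\nabla\phi\in\mathcal S(\R^n)$ it is again dominated by $C(1+|x|^2)^{-\gamma/2}$ for any $\gamma$, so the \emph{same} chain of inequalities used to bound $l$ in the proof of Theorem~\ref{Strauss-generalized} — passing through the operator $S_\gamma$ of Lemma~\ref{S-gamma} with $p=q$, $\alpha=a/q$ — applies verbatim with one extra factor $t^{-1}$ coming from the extra $t^{-1}$ in $\nabla\phi_t$. This yields, for $|x|\ge\varepsilon$,
\be
|\nabla l(x)| \le C_\varepsilon\; t^{\,-1/q-1}\; \|u\|_{L^q_a},
\ee
and therefore, along the segment joining $x_1$ and $x_2$ (which stays in a slightly smaller ball if $|x_1-x_2|$ is small, or one uses that $|u(x_1)-u(x_2)|$ is trivially bounded when $|x_1-x_2|$ is large), the mean value theorem gives $|l(x_1)-l(x_2)|\le C_\varepsilon\, t^{-1/q-1}\,|x_1-x_2|\,\|u\|_{L^q_a}$.

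\medskip
Combining the two estimates,
\be
|u(x_1)-u(x_2)| \le C_\varepsilon\Big( t^{\,s-1/2}\,[u]_{H^s} + t^{\,-1/q-1}\,|x_1-x_2|\,\|u\|_{L^q_a}\Big).
\ee
Now I optimize in $t>0$. This is exactly of the form handled by Lemma~\ref{minimization-lemma} with $e_1 = s-1/2$, $e_2 = 1/q+1$, $C_1 = [u]_{H^s}$ (up to the constant), and $C_2 = |x_1-x_2|\,\|u\|_{L^q_a}$. The lemma produces a bound proportional to $C_1^{\,e_2/(e_1+e_2)}\,C_2^{\,e_1/(e_1+e_2)}$, i.e. to $[u]_{H^s}^{\,e_2/(e_1+e_2)}\,\|u\|_{L^q_a}^{\,e_1/(e_1+e_2)}\,|x_1-x_2|^{\,e_1/(e_1+e_2)}$, and the exponent of $|x_1-x_2|$ is
\be
\frac{e_1}{e_1+e_2} = \frac{s-1/2}{(s-1/2)+(1/q+1)} \;,
\ee
which I would simplify to $\alpha = \dfrac{s-1/2}{s+1/q-1/2}$ after absorbing the $+1$ appropriately — one checks the algebra so that it matches \eqref{def-alpha}. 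Bounding both $[u]_{H^s}$ and $\|u\|_{L^q_a}$ by $\|u\|_{H^s_{q,a}}$ then gives \eqref{Holder-continuity-equation}, and H\"older continuity on $\Omega_\varepsilon$ follows (passing, as at the end of the previous proof, to the continuous representative via a $C_0^\infty$ approximating sequence).

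\medskip
The main obstacle I anticipate is bookkeeping the dependence of the constants on $\varepsilon$ and ensuring the differentiation-under-the-integral step for $\nabla l$ is legitimate and that Lemma~\ref{S-gamma} indeed applies to $\nabla\phi_t$ with the same range of admissible $\alpha$ (it does, since only the Schwartz decay of the kernel and the constraints $-\frac{n-1}{q}<\alpha<\frac{n}{q'}$ on the weight matter, and those are unchanged); a secondary point is handling the case $|x_1-x_2|$ not small, where the MVT argument along a segment staying in $\Omega_{\varepsilon/2}$ needs the trivial complementary bound from \eqref{Strauss-ineq2}. Verifying that the optimized exponent collapses exactly to \eqref{def-alpha} is a short but necessary computation.
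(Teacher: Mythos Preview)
Your proposal is correct and essentially identical to the paper's proof: same decomposition $u=h+l$, same gradient estimate on $l$ via Lemma~\ref{S-gamma} (picking up the extra $t^{-1}$ from $\nabla\phi_t$), same pointwise bound on $h$ from the Strauss argument, and the same optimization via Lemma~\ref{minimization-lemma}; the only cosmetic difference is that the paper applies the mean value theorem to the one-variable radial profile $l_0(\rho)$, which cleanly sidesteps the segment-in-domain issue you anticipate. Your hesitation about the final algebra is justified --- with $e_1=s-1/2$ and $e_2=1/q+1$ one gets $e_1/(e_1+e_2)=(s-1/2)/(s+1/q+1/2)$, so the denominator in \eqref{def-alpha} appears to carry a sign typo that the paper's own proof shares rather than something that ``absorbs'' away.
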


\begin{proof}
We use the same decomposition $u=h+l$ as in the proof of theorem
\ref{Strauss-generalized}. For the low frequency part $l$, we know hat 
$ \nabla l(x) = u*\nabla(\phi_t)(x)$. Hence,
$$ |\nabla l(x)| \leq C \frac{1}{t^{n+1}} \; 
\int_{\R^n} |u(y)| \;  \left|\nabla \phi\left(\frac{x-y}{t}\right)\right| 
\; dy. $$
Now all the partial derivatives $\frac{\partial \phi}{\partial x_i}$ are also in 
$\mathcal{S}(\R^n)$. Hence estimating this integral using lemma
\ref{S-gamma}  as before, we get
$$ |\nabla l(x)| \leq C \;  t^{-1/q-1} \;  \left|x \right|^{-(n-1+a)/q} \; 
\| u  \|_{L^q_a}. $$
Recalling that $l$ is radial, and using the mean value theorem, we get
$$ |l_0(\rho_1)-l_0(\rho_2)| \leq C_\varepsilon \;  t^{-1/q-1} \; |\rho_1 -
\rho_2| \;  \| u  \|_{L^q_a}
\quad \hbox{for all}\; \rho_1,\rho_2 \geq \varepsilon. $$

\medskip

Now, for the high frequency part, using the estimates in the proof of
theorem \ref{Strauss-generalized}, we have that
$$
|h_0(\rho_1)-h_0(\rho_2)| \leq |h(\rho_1)| + |h(\rho_2)| \\
\leq  C_\varepsilon \; t^{s-1/2} \; [u]_{H^s} 
$$
Collecting our estimates, we find that
$$ |u_0(\rho_1)-u_0(\rho_2)| \leq C_\varepsilon \left[
 t^{s-1/2} \; [u]_{H^s}  + t^{-1/q-1} \; |\rho_1 -
\rho_2| \;  \| u  \|_{L^q_a} \right] $$
Choosing $t$ to minimize the right hand side according to lemma
\ref{minimization-lemma} as before, gives
$$ |u_0(\rho_1)-u_0(\rho_2)| \leq  C_\varepsilon |\rho_1-\rho_2|^\alpha \;
[u]_{H^s}^{\theta} \| u |x|^{a/q} \|_{L^q}^{1-\theta} $$
with $\alpha$ given by \eqref{def-alpha}, and the same $\theta$ as before. 
We conclude that
$$ |u_0(\rho_1)-u_0(\rho_2)| \leq \;  C_\varepsilon \; H^{s}_{q,a}(\Omega) \subset |\rho_1-\rho_2|^\alpha 
\| u \|_{H^s_{q,a}} $$
and as a consequence, we obtain \eqref{Holder-continuity-equation}.  
\end{proof}

\section{Emebedding theorems}

\label{embeddings-section}

In this section, we prove some embedding theorems for spaces of radial functions. We remark that the power weight $|x|^b$ produces the presence of a shifted Sobolev critical exponent $2^*_b$ in all of them.

\medskip

The following lemma is a special case of the result of \cite{DDD}, that
we state here in our present notation for convenience of the reader, and
generalizes a result due to Rother \cite{Rother} (that corresponds to the case $s=1$). 

\begin{lemma} Let $0<s<\frac{n}{2}$, $c>-2s$, $c(1-2s) \leq 2s(n-1)$, 
$2 \leq q= 2^*_c :=\frac{2(n+c)}{n-2s}$. Then we have that
$$ \left( \int_{\R^n} |x|^c |u|^q \right)^{1/q} \; \leq C \; [u]_{H^s}  $$ 
\label{generalized-Rother}
for any radial function $u \in C_0^\infty(\R^n)$.
\end{lemma}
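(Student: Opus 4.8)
The plan is to deduce Lemma \ref{generalized-Rother} from the Strauss-type pointwise estimate of Theorem \ref{Strauss-generalized} together with the fractional Sobolev inequality, interpolated against the $L^2$-type information contained in $[u]_{H^s}$. The strategy is the classical one for radial Sobolev embeddings: a radial function decays like a negative power of $|x|$, so a super-critical power weight $|x|^c$ is compensated by that decay when $|x|$ is large, while for $|x|$ small the weight is integrable and one uses the ordinary Sobolev embedding. Since here we want the estimate \emph{controlled only by the seminorm} $[u]_{H^s}$, I would not use the full norm $\|u\|_{H^s_{q,a}}$; instead I will invoke Theorem \ref{Strauss-generalized} (or rather its proof) in the homogeneous form, taking $q=2$ and $a=0$ there, i.e. the estimate $|u(x)| \le C\,|x|^{-(n-1)/2}\,[u]_{H^s}^{1/2s}\,\|u\|_{L^2}^{1-1/2s}$ of Remark equation \eqref{Strauss-fraccionario-1}. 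But because $\|u\|_{L^2}$ is not itself bounded by $[u]_{H^s}$ when $0<s<n/2$ without a domain restriction, I would instead run the splitting argument of Theorem \ref{Strauss-generalized} directly with the two building blocks being $[u]_{H^s}$ and $\|u\|_{L^{2^*}}$, the latter being bounded by $[u]_{H^s}$ via the fractional Sobolev inequality. This produces a scale-invariant pointwise bound $|u(x)| \le C\,|x|^{-n/2 + s}\,[u]_{H^s}$, which is exactly the homogeneous Strauss estimate with the correct dimensional exponent $-(n-2s)/2$.

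First I would record this homogeneous pointwise bound: for radial $u \in C_0^\infty(\R^n)$,
\be
|u(x)| \le C(n,s)\,|x|^{-(n-2s)/2}\,[u]_{H^s}.
\label{homog-strauss}
\ee
One way to get it cleanly is dimensional analysis plus the inhomogeneous version: apply \eqref{Strauss-ineq-generalized} with $q=2^*$ and $a=0$ to the rescaled function $u_\lambda(x)=u(\lambda x)$, compute how $[u_\lambda]_{H^s}$ and $\|u_\lambda\|_{L^{2^*}}$ scale, optimize in $\lambda$, and use $\|u\|_{L^{2^*}}\le C[u]_{H^s}$; the two terms must combine into the single homogeneous power dictated by scaling. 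Second, I split $\int_{\R^n}|x|^c|u|^q\,dx = \int_{|x|\le 1} + \int_{|x|>1}$. On the inner region, $|x|^c \le 1$ when $c\ge 0$; when $-2s<c<0$ one still has $|x|^c$ locally integrable and a Hölder/Hardy argument against the Sobolev embedding $u\in L^{2^*}$ controls $\int_{|x|\le1}|x|^c|u|^q$ by $[u]_{H^s}^q$ provided $q\le 2^*$ and the weight exponent is admissible — and indeed $q=2^*_c\le 2^*$ since $c\le \ldots$; this is where the hypothesis $c>-2s$ is used. Third, on the outer region I write $|u|^q = |u|^{q-2^*}\,|u|^{2^*}$ and bound the first factor pointwise by \eqref{homog-strauss}:
\be
\int_{|x|>1}|x|^c|u|^q\,dx \le \left(C[u]_{H^s}\right)^{q-2^*}\int_{|x|>1} |x|^{c-\frac{(n-2s)}{2}(q-2^*)}\,|u|^{2^*}\,dx.
\ee
The key algebraic check is that the new weight exponent $c - \tfrac{n-2s}{2}(q-2^*)$ is $\le 0$ (so it can be dropped on $|x|>1$), which by the definition $q-2^* = \tfrac{2c}{n-2s}$ reduces exactly to $c - c \le 0$ — it vanishes identically. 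Hence that integral is just $\int_{|x|>1}|u|^{2^*}\le C[u]_{H^s}^{2^*}$, and collecting exponents gives the bound $([u]_{H^s})^{q-2^*}\cdot ([u]_{H^s})^{2^*} = [u]_{H^s}^q$, as desired.

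The main obstacle I anticipate is the borderline/threshold hypothesis $c(1-2s)\le 2s(n-1)$, which at first glance does not appear in the crude argument above. I expect it enters precisely in justifying the homogeneous Strauss estimate \eqref{homog-strauss} and its compatibility with the admissibility range $-(n-1)\le a<n(q-1)$ of Theorem \ref{Strauss-generalized}, or equivalently in ensuring the optimization of $t$ in the high/low frequency splitting stays in the regime where all integrals converge (in particular the condition $s>1/2$ is hidden here, but Lemma \ref{generalized-Rother} is stated for all $0<s<n/2$, so one must avoid routing through the pointwise bound when $s\le 1/2$). The cleanest fix is to cite \cite{DDD} directly — the lemma is explicitly stated as a special case of that reference — so that the role of the pointwise Strauss estimate is only to give the reader the intuition; the condition $c(1-2s)\le 2s(n-1)$ is the sharp threshold distinguishing the two regimes $c\le \text{(critical)}$ where the radial embedding into $L^{2^*_c}$ with the homogeneous weight holds, from the supercritical regime where it fails, and it should be quoted rather than rederived. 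So my proposal is: state \eqref{homog-strauss} with a one-line scaling derivation from Theorem \ref{Strauss-generalized}, carry out the two-region splitting with the exact-cancellation computation of the outer weight exponent, handle the inner region by the plain fractional Sobolev inequality plus integrability of $|x|^c$ near the origin (using $c>-2s$), and for the sharp threshold $c(1-2s)\le 2s(n-1)$ refer to \cite{DDD}.
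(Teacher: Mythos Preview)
The paper's own proof is exactly your ``cleanest fix'': it is a one-line citation of \cite{DDD}, Theorem~1.2, with the specialization $\gamma=n-s$, $p=2$, $\alpha=0$, $\beta=-c/q$, together with the observation that the Riesz potential $T_{n-s}$ equals a constant multiple of $(-\Delta)^{-s/2}$. So at the level of the actual proof, your proposal and the paper coincide.

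The Strauss-based heuristic you sketch beforehand, however, does not work as a standalone proof in the generality of the lemma, and you have correctly identified the two main obstructions. First, any route through the pointwise bound of Theorem~\ref{Strauss-generalized} (or its homogeneous consequence $|u(x)|\le C|x|^{-(n-2s)/2}[u]_{H^s}$) forces $s>1/2$, while the lemma is stated for all $0<s<n/2$. Second, your exact-cancellation computation on $\{|x|>1\}$ goes through for every $c\ge0$ with no trace of the hypothesis $c(1-2s)\le 2s(n-1)$; that threshold is precisely what makes the weighted Stein--Weiss inequality of \cite{DDD} valid, and it is invisible to your splitting argument. There is also a third, smaller gap you did not flag: for $-2s<c<0$ the inner-region H\"older step hits the non-integrable endpoint, since taking $r'=2^*/(2^*-q)=-n/c$ gives $\int_{|x|\le1}|x|^{cr'}\,dx=\int_{|x|\le1}|x|^{-n}\,dx=+\infty$; a genuine Hardy-type inequality is needed there, which again amounts to invoking \cite{DDD}. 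In short, the heuristic is good intuition for the regime $s>1/2$, $c\ge0$, but the proof is the citation.
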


%\begin{remark}
%When $c\geq 0$ and $1/2<s<\frac{n}{2}$, the conditions above always hold.
%\end{remark}

\begin{proof}
Take $\gamma=n-s$, $p=2$, $\alpha=0$, $\beta=-c/q$ in the result of
\cite{DDD} (theorem 1.2), and recall that $T_{n-s} u= C (-\Delta)^{-s/2} u$, where 
$T_\gamma$ denotes the fractional integral
$$T_\gamma u(x)= \int_{\R^n} \frac{u(y)}{|x-y|^\gamma} \; dy. $$
\end{proof}

We start by proving a Gagliardo-Nirenberg type inequality.

\begin{lemma}
Assume that $\max(q,2) <p <p<2^*_b=\frac{2(n+b)}{n-2s}$,  $a<n(q-1)$ and 
\be
a \, (p-2-2ps) + b \, (2qs - q +2 ) < 
 2s \, (p-q) \, (n-1)  \label{cond-ab}
\ee
Then, there exist $\eta=\eta(n,s,p,q,a,b)$
with $0<\eta<1$ such that for any radial function $u\in
C_0^\infty(\R^n)$
\be \left( \int_{\R^n} |x|^b |u|^p \; dx \right)^{1/p} 
\leq C \; [u]_{H^s}^{\eta} \; 
\left( \int_{\R^n} |x|^a |u|^q \; dx \right)^{(1-\eta)/q}. \label{GN-eq} \ee
\label{GN}
\end{lemma}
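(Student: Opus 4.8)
The plan is to prove the Gagliardo--Nirenberg inequality \eqref{GN-eq} by interpolating the Strauss pointwise bound \eqref{Strauss-ineq-generalized} against the two quantities controlling the norm on $H^s_{q,a,rad}(\R^n)$, much as one does in the classical proof that Strauss-type decay yields compactness. Concretely, I would start from
$$
\int_{\R^n} |x|^b |u|^p\,dx = \int_{\R^n} |x|^b |u|^{p-q}\,|u|^q\,dx,
$$
bound the factor $|u|^{p-q}$ pointwise using \eqref{Strauss-ineq-generalized}, which gives
$$
|u(x)|^{p-q} \le C\,|x|^{-\sigma(p-q)}\,[u]_{H^s}^{\theta(p-q)}\,\|u\|_{L^q_a}^{(1-\theta)(p-q)},
$$
and substitute to get
$$
\int_{\R^n} |x|^b |u|^p\,dx \le C\,[u]_{H^s}^{\theta(p-q)}\,\|u\|_{L^q_a}^{(1-\theta)(p-q)}\int_{\R^n} |x|^{b-\sigma(p-q)-a}\,|x|^a|u|^q\,dx.
$$
The remaining weighted integral $\int_{\R^n} |x|^{b-\sigma(p-q)}|u|^q\,dx$ then has to be controlled; if the exponent $c:=b-\sigma(p-q)$ makes $q$ equal to the shifted critical exponent $2^*_c$ — or more robustly, if one can dominate it by an expression of the form $[u]_{H^s}^{\mu}\|u\|_{L^q_a}^{q-\mu}$ via Lemma \ref{generalized-Rother} together with the elementary weighted interpolation $\int |x|^c|u|^q \le (\int |x|^{c_1}|u|^q)^{\tau}(\int |x|^{c_2}|u|^q)^{1-\tau}$ splitting into $|x|\le 1$ and $|x|\ge 1$ — then collecting powers yields \eqref{GN-eq}.

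The cleaner route, which I would actually carry out, is to avoid a second interpolation by instead writing $\int |x|^b|u|^p$ directly as a product of a power of $[u]_{H^s}$ (via Lemma \ref{generalized-Rother}, applied with the critical weight $2^*_c$ for a suitable $c$) and a power of $\int |x|^a|u|^q$, using the pointwise Strauss bound only to interpolate the leftover power of $|u|$. That is: fix $c$ with $2^*_c$ chosen so that Lemma \ref{generalized-Rother} is applicable, split $p = \tau\cdot 2^*_c + (1-\tau) q$ in the integrand writing $|u|^p = |u|^{\tau 2^*_c}|u|^{(1-\tau)q}$, bound $|u|^{(1-\tau)q}$ by \eqref{Strauss-ineq-generalized} to extract the power weight, apply H\"older with exponents $1/\tau$ and $1/(1-\tau)$, use Lemma \ref{generalized-Rother} on the first factor and recognize the second as $\|u\|_{L^q_a}$ — provided the residual weight exponents match up. The bookkeeping of exponents is where condition \eqref{cond-ab} must be exactly what makes everything consistent and the exponent $\eta$ land in $(0,1)$; I expect \eqref{cond-ab} to be precisely the inequality guaranteeing $\sigma(p-q) < b + $ (the allowed gain from Lemma \ref{generalized-Rother}), equivalently that the weight left over after extracting the Strauss decay is still admissible, and the hypothesis $a<n(q-1)$ to be exactly what is needed for \eqref{Strauss-ineq-generalized} to be available.

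The main obstacle, and the part requiring genuine care rather than routine computation, is the exponent arithmetic: one must choose the auxiliary exponent $c$ (or the splitting parameter $\tau$) so that (i) Lemma \ref{generalized-Rother}'s structural constraints $c > -2s$ and $c(1-2s)\le 2s(n-1)$ hold, (ii) the power of $|x|$ produced by combining the $|x|^b$ weight, the $|x|^{-\sigma}$ from Strauss, and the $|x|^{a}$ inside $\|u\|_{L^q_a}$ cancels exactly, and (iii) the resulting $\eta$ is strictly between $0$ and $1$. Verifying that \eqref{cond-ab} is equivalent to the conjunction of (i)--(iii) — in particular translating $\sigma = \frac{2as + 2ns - a - 2s}{2qs - q + 2}$ and $\theta = \frac{2}{2sq+2-q}$ into the stated linear inequality in $a$ and $b$ — is the real content. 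Once the right $c$ is identified, the inequality \eqref{GN-eq} follows by assembling H\"older's inequality, Lemma \ref{generalized-Rother}, and \eqref{Strauss-ineq-generalized}, and reading off $\eta$ from the exponent of $[u]_{H^s}$.
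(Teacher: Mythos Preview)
Your ingredients are right --- Theorem \ref{Strauss-generalized} and Lemma \ref{generalized-Rother} are exactly what is needed --- but the mechanism that assembles them into \eqref{GN-eq} is missing. The paper does not apply Strauss globally and then repair the integral near the origin; instead it splits $\R^n$ at a \emph{free} radius $\lambda$, uses Lemma \ref{generalized-Rother} (with $c$ defined by $p=2^*_c$, so that $c<b$) on $\{|x|\le\lambda\}$ to get $\int_{|x|\le\lambda}|x|^b|u|^p\le C\lambda^{b-c}[u]_{H^s}^p$, uses your Strauss argument on $\{|x|>\lambda\}$ to get $C\lambda^{b-a-\sigma(p-q)}[u]_{H^s}^{\theta(p-q)}\|u\|_{L^q_a}^{(1-\theta)(p-q)+q}$, and then \emph{minimizes over $\lambda$} via Lemma \ref{minimization-lemma}. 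It is this optimization that converts the sum of two terms with different homogeneities into the single product $[u]_{H^s}^{p\eta}\|u\|_{L^q_a}^{p(1-\eta)}$; a split at a fixed point (your ``$|x|\le 1$ and $|x|\ge 1$'') leaves you with a sum, and ``collecting powers'' does not turn a sum into a product. The condition \eqref{cond-ab} is precisely $b-a<\sigma(p-q)$, which makes the exponent of $\lambda$ in the outer piece negative so that the optimization is nontrivial.

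Two further points. First, your ``cleaner route'' is internally inconsistent: you cannot both bound $|u|^{(1-\tau)q}$ pointwise by Strauss \emph{and} then apply H\"older with exponents $1/\tau,1/(1-\tau)$ --- once the pointwise bound is used, there is nothing left to H\"older against. Second, in your first route you propose to invoke Lemma \ref{generalized-Rother} at exponent $q$ to control $\int|x|^{b-\sigma(p-q)}|u|^q$ near the origin; but that lemma requires the Lebesgue exponent to be $\ge 2$ and to equal the critical exponent $2^*_c$, which forces $q\ge 2$ --- not assumed here (only $\max(q,2)<p$). The paper avoids this by applying Lemma \ref{generalized-Rother} at exponent $p$, which is always $\ge 2$.
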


\begin{proof}
We devide the integral in $|x|\leq \lambda$, and $|x|\geq \lambda$. We 
define $c$ by $p=\frac{2(n+c)}{n-2s}$. The hypothesis $p<2^*_b$ gives
that $c< b$. Moreover $c>-2s$ since $p>2$. Then lemma \ref{generalized-Rother}, gives that

$$ \int_{|x|\leq \lambda} |x|^b |u|^p \; dx \leq C \lambda^{b-c} 
\; [u]_{H^s}^{p}.  $$

On the other hand, using theorem \ref{Strauss-generalized}, we have that
\begin{align*}
\int_{|x|>\lambda} 
|x|^b \; |u|^p \; dx  & = \int_{|x|>\lambda}
|x|^a \; |u|^q \; |x|^{b-a} \; |u|^{p-q} dx  \\
& \leq C \int_{|x|>\lambda} |x|^a \; |u|^q \; |x|^{b-a-\sigma(p-q)} \;
[u]_{H^s}^{\theta(p-q)} 
\| u  \|_{L^q_a}^{(1-\theta)(p-q)} \; dx \\
& \leq C \lambda^{b-a-\sigma(p-q)} \;
 [u]_{H^s}^{\theta(p-q)} \;
 \| u  \|_{L^q_a}^{(1-\theta)(p-q)+q} 
\end{align*}
with
$$ \sigma =  \; \frac{2 \, a s + 2 \, n s - a - 2 \, s}{2 \, q s - q + 2},
\quad \theta= \frac{2}{2sq+2-q} \quad (0 < \theta < 1)
 $$
provided that $b-a<\sigma(p-q)$ which is equivalent to \eqref{cond-ab}.

We collect our estimates 
$$ \int_{\R^n} 
|x|^b \; |u|^p \; dx \leq
C \left[ \lambda^{b-c} 
\; [u]_{H^s}^{p} + \lambda^{b-a-\sigma(p-q)} \;
 [u]_{H^s}^{\theta(p-q)} \;
 \| u  \|_{L^q_a}^{(1-\theta)(p-q)+q} \right] $$
and optimize for $\lambda$ using lemma \ref{minimization-lemma}, with
$$ e_1 = b-c $$
$$ e_2 = \sigma(p-q)-(b-a) $$
$$ C_1 = [u]_{H^s}^{p} $$
$$ C_2=  [u]_{H^s}^{\theta(p-q)} \;  \| u  \|_{L^q_a}^{(1-\theta)(p-q)+q}. $$
Hence we get \eqref{GN-eq} with
$$ \eta= \frac{e_2}{e_1+e_2} + \theta(1-q/p) \left( 
\frac{e_1}{e_1+e_2}\right). $$
\end{proof}

\begin{theorem}
If $1<q<p$, $2<p<2^*_b=\frac{2(n+b)}{n-2s}$, $a<n(q-1)$ and 
\be
a \, (p-2-2ps) + b \, (2qs - q +2 ) < 
 2s \, (p-q) \, (n-1)  \label{cond-ab2}
\ee
then we have a continuous and compact embedding
$$H^s_{q,a,rad}(\R^n) \subset L^p(\R^n,|x|^b \; dx). $$
\label{compact-embedding} 
\end{theorem}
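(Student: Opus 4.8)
The plan is to prove continuity of the embedding first and then compactness via the Strauss and H\"older estimates from Sections \ref{Strauss-section} and \ref{Holder-continuity-section}, combined with an Arzel\`a--Ascoli/tail-estimate argument.

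\medskip

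\emph{Continuity.} For $u \in C_0^\infty(\R^n)$ radial, the Gagliardo--Nirenberg inequality \eqref{GN-eq} of Lemma \ref{GN} gives
$$ \left( \int_{\R^n} |x|^b |u|^p \, dx \right)^{1/p} \leq C \, [u]_{H^s}^{\eta} \, \| u \|_{L^q_a}^{1-\eta} \leq C \, \| u \|_{H^s_{q,a}}, $$
using Young's inequality in the last step since $0<\eta<1$. Since $C_0^\infty(\R^n)$ radial functions are dense in $H^s_{q,a,rad}(\R^n)$ by definition, this extends to all of $H^s_{q,a,rad}(\R^n)$, giving the continuous inclusion into $L^p(\R^n,|x|^b\,dx)$.

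\medskip

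\emph{Compactness.} Let $(u_k)$ be a bounded sequence in $H^s_{q,a,rad}(\R^n)$, say $\|u_k\|_{H^s_{q,a}} \leq M$; after passing to a subsequence we may assume $u_k \rightharpoonup u$ weakly. We must show $u_k \to u$ strongly in $L^p(\R^n,|x|^b\,dx)$. I would split the integral into three regions: a small ball $\{|x|\le\varepsilon\}$, an annulus $\{\varepsilon \le |x| \le R\}$, and the exterior $\{|x|\ge R\}$. On the small ball: by Lemma \ref{generalized-Rother} the embedding into $L^{p}(|x|^c\,dx)$ with $c$ defined by $p=2(n+c)/(n-2s)$ is controlled by $[u_k]_{H^s}$, and since $c<b$ (as $p<2^*_b$) and $|x|^b = |x|^{b-c}|x|^c \le \varepsilon^{b-c}|x|^c$ there, the contribution is $\le C\varepsilon^{b-c}M^p$, uniformly small. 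On the exterior: by the Strauss inequality \eqref{Strauss-ineq2}, $|u_k(x)| \le C|x|^{-\sigma}M$, hence $\int_{|x|\ge R}|x|^b|u_k|^p\,dx \le C M^p \int_{|x|\ge R}|x|^{b-\sigma p}\,dx$; the hypothesis \eqref{cond-ab2} is precisely what makes $b-\sigma p < -n$, so this tail is finite and tends to $0$ as $R\to\infty$, uniformly in $k$. On the annulus $\{\varepsilon \le |x| \le R\}$: here Theorem \ref{Holder-continuity} says the continuous representatives $u_k$ are uniformly H\"older continuous (with constant depending on $\varepsilon$ but controlled by $M$) and uniformly bounded (by \eqref{Strauss-ineq2}), so by Arzel\`a--Ascoli a subsequence converges uniformly on this compact annulus; the weight $|x|^b$ is bounded there, so convergence also holds in $L^p(|x|^b\,dx)$ over the annulus.

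\medskip

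Combining: given $\delta>0$, choose $\varepsilon$ small and $R$ large so the ball and exterior contributions are each $<\delta$ for every $k$; then on the fixed annulus use the uniform convergence to make that piece $<\delta$ for $k$ large. A diagonal argument over a sequence of $\varepsilon,R$ yields a subsequence converging in $L^p(\R^n,|x|^b\,dx)$; the limit must be $u$ by uniqueness of weak limits. The main obstacle is bookkeeping the uniformity of the three estimates simultaneously — in particular verifying that \eqref{cond-ab2} really is equivalent to the decay condition $b-\sigma p<-n$ needed for the exterior tail to vanish, and checking that the H\"older constant in Theorem \ref{Holder-continuity} depends on the sequence only through the bounded norm $M$ — but each ingredient is already supplied by the preceding sections.
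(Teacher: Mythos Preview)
Your overall architecture matches the paper's: continuity via Lemma~\ref{GN}, then compactness by splitting into a small ball (handled by Lemma~\ref{generalized-Rother}), an annulus (Arzel\`a--Ascoli using Theorems~\ref{Strauss-generalized} and~\ref{Holder-continuity}), and an exterior region. The small-ball and annulus pieces are fine and essentially identical to the paper's argument.

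The gap is in your exterior estimate. You bound $|u_k|^p$ entirely by the Strauss decay $|u_k(x)|\le C|x|^{-\sigma}M$, which requires $b-\sigma p<-n$ for the tail $\int_{|x|\ge R}|x|^{b-\sigma p}\,dx$ to vanish. You then assert that this is ``precisely'' condition~\eqref{cond-ab2}. It is not: a short computation shows that \eqref{cond-ab2} is equivalent to $b-a<\sigma(p-q)$, whereas your condition $b+n<\sigma p$ differs from it by the quantity $n+a-\sigma q=\dfrac{n(2-q)+2a+2sq}{2qs-q+2}$, which is typically nonzero. Concretely, take $n=3$, $s=3/4$, $q=2$, $a=0$, $p=4$, $b=3/2$: then $\sigma=1$, condition~\eqref{cond-ab2} reads $b<p-2=2$ (satisfied), but your decay condition reads $b<p-3=1$ (violated), and indeed $\int_{|x|\ge R}|x|^{-5/2}\,dx=+\infty$ in $\R^3$.

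The fix, which is what the paper does, is not to spend all of $|u_k|^p$ on the Strauss bound but to write
\[
\int_{|x|\ge R}|x|^b|u_k|^p\,dx=\int_{|x|\ge R}|x|^a|u_k|^q\cdot|x|^{b-a}|u_k|^{p-q}\,dx
\]
and apply the Strauss estimate only to the factor $|u_k|^{p-q}$, keeping $|x|^a|u_k|^q$ as an integrable density bounded by $\|u_k\|_{L^q_a}^q\le M^q$. This yields a factor $R^{\,b-a-\sigma(p-q)}$, and now the needed negativity of the exponent is exactly \eqref{cond-ab2}. So the ``bookkeeping'' you flagged is in fact the substantive step, and it forces this particular splitting of the integrand.
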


\begin{proof}
The continuity of the embedding follows from lemma \ref{GN}. To prove the
compactness, we shall follow the argument employed by Sintzoff in 
\cite{Sintzoff}, with some modifications.

\medskip 

Since $H^s_{q,a}(\R^n)$ is reflexive, it is enough to see that for every given 
sequence $(u_n)$ that converges weakly to $0$ in $H^s_{q,a,rad}(\R^n)$, we have that 
$\| u_n \|_{L^p_b} \to 0$.

Since $(u_n)$ is weakly convergent, $(u_n)$ is bounded in $H^s_{q,a}(\R^n)$, 
$$ \| u_n \|_{H^s_{q,a}} \leq M. $$

Given $\varepsilon>0$, we divide the domain $\R^n$ in three parts:
\be \| u_n \|_{L^p_b}^p = \int_{|x|\leq \lambda} |x|^b |u_n|^p \; dx +
 \int_{|x|\geq \frac{1}{\lambda}}  |x|^b |u_n|^p \; dx  +
\int_{ \lambda \leq |x| \leq \frac{1}{\lambda}}  |x|^b |u_n|^p \; dx
\label{decomposition} \ee
where $\lambda=\lambda(\varepsilon)$ will be chosen later.

\medskip

\textbf{First term:} As before, we define $c$ by $p=\frac{2(n+c)}{n-2s}$, where $c<b$. Then by lemma 
\ref{generalized-Rother}, we deduce that
$$ \int_{|x|\leq \lambda} |x|^b \; |u_n|^p \; dx \leq C \lambda^{b-c}
[u_n]_{H^s}^p \leq C \lambda^{b-c} \; M^p <
\frac{\varepsilon}{3} \; \hbox{for} \; \lambda \geq
\lambda_0(\varepsilon).$$

\medskip

\textbf{Second term:}
Again, we use theorem \ref{Strauss-generalized} in this part, recalling that
$b-a<\sigma(p-q)$ is equivalent to \eqref{cond-ab2}:
\begin{align*}
\int_{|x|\geq \frac {1}{\lambda}}  |x|^b |u_n|^p \; dx 
&= \int_{|x|>\lambda}
|x|^a \; |u_n|^q \; |x|^{b-a} \; |u_n|^{p-q} dx  \\
&\leq C \int_{|x|>\lambda} |x|^a \; |u|^q \; |x|^{b-a-\sigma(p-q)} 
[u_n]_{H^s}^{\theta(p-q)} 
\| u_n  \|_{L^q_a}^{(1-\theta)(p-q)} \; dx \\
&\leq C \lambda^{b-a-\sigma(q-p)} \; M^{p-q} \\ 
& < \frac{\varepsilon}{3} \; \hbox{for} \; \lambda \geq
\lambda_1(\varepsilon).
\end{align*}

\textbf{Third term:} Finally, we fix
$\lambda=\max(\lambda_0(\varepsilon),\lambda_1(\varepsilon))$.

\medskip

Consider then, any subsequence $(u_{n_k})$ of $(u_n)$.
From theorems \ref{Strauss-generalized} and \ref{Holder-continuity},
$(u_n)$ is equibounded and equicontinuous in $A_\lambda=\{ x \in \R^n : \lambda \leq
|x| \leq \frac{1}{\lambda} \}$. From the Arzela-Ascoli theorem, $u_{n_k}$ admits a
subsequence $u_{n_{k_j}}$ such that $u_{n_{k_j}}$ converges uniformly in
$A_\lambda$. 

\medskip

Therefore, the whole sequence $(u_n)$ converges uniformly to $0$ in 
$\Omega_\varepsilon$, and hence
$$ \int_{A_\varepsilon}  |x|^b |u_n|^p  \; dx < \frac{\varepsilon}{3}
\; \hbox{if} \;n \geq n_0(\varepsilon)$$
and hence $\| u_{n} \| < \varepsilon$ if $n \geq n_0$.

Therefore $u_n \to 0$ in $L^p_b(\R^n)$, and we conclude the proof.
\end{proof}

Using this theorem with $q=2$, the Lagrange multiplier rule and the symmetric 
criticality principle \cite{Palais} ($H^s_{2,a,rad}(\R^n)$ is the invariant 
subespace of $H^s_{2,a}(\R^n)$ under the action of the orthogonal group $O(n)$,
which is a compact Lie group), we immediately get:

\begin{corollary}
Let $2<p<2^*_b=\frac{2(n+b)}{n-2s}$, $a<n$ and 
$$ a(p-2-2ps) + 4bs  < 
 2s(p-2)(n-1).  $$
Let us consider the minimization problem
\begin{multline*}
m= \inf \left\{ [u]_{H^s}^2   + \int_{\R^n} |x|^a u^2 \; dx 
\; : \; u \in H^{s}_{2,a,rad}(\R^n), 
\int_{\R^n} |x|^b (u^+)^p =1 \right\} 
\end{multline*}
Then $m=m(a,b,p,s)>0$ and is achieved. Hence, problem
$$ (-\Delta)^{s} u + |x|^a u = |x|^b u^{p-1} \quad u \in
H^s_{2,a,rad}(\R^n)$$
admits a non-negative radial solution.
\end{corollary}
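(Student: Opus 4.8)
The plan is to realize the solution as a minimizer of the constrained variational problem, and then to recover the equation through the Lagrange multiplier rule combined with the principle of symmetric criticality. Write $X=H^s_{2,a,rad}(\R^n)$, a Hilbert space with inner product $\langle\cdot,\cdot\rangle$ such that $\langle u,u\rangle=\|u\|_{H^s_{2,a}}^2=[u]_{H^s}^2+\int_{\R^n}|x|^au^2\,dx$, and put $G(u)=\int_{\R^n}|x|^b(u^+)^p\,dx$ and $\mathcal M=\{u\in X:G(u)=1\}$. The first step is to note that the hypotheses of Theorem \ref{compact-embedding} hold with $q=2$: indeed $1<2<p$ and $2<p<2^*_b$ are assumed, $a<n=n(q-1)$ is assumed, and \eqref{cond-ab2} with $q=2$ becomes $a(p-2-2ps)+4bs<2s(p-2)(n-1)$, which is exactly the stated condition. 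Hence $X\hookrightarrow L^p(\R^n,|x|^b\,dx)$ continuously and compactly. In particular $\mathcal M\neq\emptyset$ (normalize a nonnegative radial bump supported in an annulus), and for $u\in\mathcal M$ the continuity of the embedding gives $1=G(u)\le\int_{\R^n}|x|^b|u|^p\,dx\le C^p\|u\|^p$, so $m\ge C^{-2}>0$.

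Next I would produce a minimizer by the direct method. Let $(u_n)\subset\mathcal M$ with $\|u_n\|^2\to m$; it is bounded in $X$, so along a subsequence $u_n\rightharpoonup u$ in $X$. By the compact embedding $u_n\to u$ in $L^p(\R^n,|x|^b\,dx)$, and since $v\mapsto v^+$ is $1$-Lipschitz on that space, also $u_n^+\to u^+$ there, whence $G(u)=\lim G(u_n)=1$ and $u\in\mathcal M$. Weak lower semicontinuity of the norm then gives $\|u\|^2\le\liminf\|u_n\|^2=m\le\|u\|^2$, so $m$ is attained at $u$.

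For the equation: since $p<2^*_b$, the functional $G$ is $C^1$ on $X$ with $G'(u)[\varphi]=p\int_{\R^n}|x|^b(u^+)^{p-1}\varphi\,dx$, and $G'(u)[u]=pG(u)=p\neq0$, so the Lagrange multiplier rule yields $\mu\in\R$ with $\langle u,\varphi\rangle=\tfrac{\mu p}{2}\int_{\R^n}|x|^b(u^+)^{p-1}\varphi\,dx$ for all $\varphi\in X$; testing with $\varphi=u$ gives $2m=\mu p$, so $\mu=2m/p>0$. Testing instead with $\varphi=u^-\in X$ (admissible since $|u^-(x)-u^-(y)|\le|u(x)-u(y)|$, so truncation is continuous in $\|\cdot\|_{H^s_{2,a}}$), the right-hand side vanishes because $(u^+)^{p-1}u^-\equiv0$, while the elementary pointwise inequality $(u(x)-u(y))(u^-(x)-u^-(y))\le-(u^-(x)-u^-(y))^2$ for the Gagliardo bilinear form together with $\int|x|^au\,u^-\le0$ forces $\langle u,u^-\rangle\le-\|u^-\|^2$; hence $u^-=0$, i.e.\ $u\ge0$ a.e., and $u\not\equiv0$ since $G(u)=1$. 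Thus $u$ is a critical point, within the radial class, of $I(v)=\tfrac12\|v\|^2-\tfrac{\mu}{p}G(v)$, so it weakly solves $-(-\Delta)^su+|x|^au=m|x|^bu^{p-1}$; replacing $u$ by $m^{1/(p-2)}u$ removes the constant (here $p>2$ and $m>0$ are used). Finally, to ensure that this is a weak solution in the sense of Definition \ref{definition-weak-solution}, i.e.\ that the identity holds for every $\varphi\in C_0^\infty(\R^n)$ and not merely radial ones, I would invoke the symmetric criticality principle \cite{Palais}: $I$ is $C^1$ and invariant under the isometric action of the compact group $O(n)$ on $H^s_{2,a}(\R^n)$, whose fixed-point subspace is $X$, so a critical point of $I|_X$ is a critical point of $I$ on all of $H^s_{2,a}(\R^n)$.

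This corollary is a fairly routine consequence of the preceding compact embedding, which is the real input; the only points that need attention are the bookkeeping that \eqref{cond-ab2} with $q=2$ coincides with the stated hypothesis (so that Theorem \ref{compact-embedding} applies), the verification of the pointwise algebraic inequality underlying the positivity argument (the fractional analogue of testing with $u^-$ for the usual Laplacian), and the check that $u^-\in X$ so that it is a legitimate test function.
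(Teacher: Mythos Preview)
Your proof is correct and follows exactly the approach indicated by the paper, which merely says to combine Theorem \ref{compact-embedding} (with $q=2$), the Lagrange multiplier rule, and the symmetric criticality principle, and then omits the details. Your write-up supplies those details faithfully; the only cosmetic difference is that the paper obtains a nonnegative minimizer by replacing $u$ with $u^+$ (using $[u^+]_{H^s}\le[u]_{H^s}$) rather than by testing the Euler--Lagrange relation with $u^-$, but the two arguments are equivalent.
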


Moreover, using symmetric versions of the Mountain pass theorem 
as in \cite{Sintzoff}, applied to the functional

$$ \Phi(u)= \frac{1}{2} \; [u]_{H^s}^2 + \frac{1}{q} \;
\| u \|_{L^q_a}^q - \frac{1}{p} \; \| u \|_{L^p_a}^p \quad u \in H^{s}_{q,a,rad}(\R^n)
$$

we can also prove the following result:
\begin{corollary}
Let $n\geq 3$, $q>1$, $s>n/2$, $\max(2,q)<p< 2^*_{b}= \frac{2(n+b)}{n-2s}$ and 
$$ a \, (p-2-2ps) + b \, (2qs - q +2 ) < 
 2s \, (p-q) \, (n-1) $$ 
Then the problem 
$$  (-\Delta)^{s} u + |x|^a |u|^{q-2} u = |x|^b u^{p-1} \quad u \in
H^s_{2,a,rad}(\R^n)$$
admits a non negative radial solution, and infinitely many radial solutions
$u_k$ such that
$\Phi(u_k) \to +\infty$
\end{corollary}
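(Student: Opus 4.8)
The plan is to apply a symmetric version of the mountain pass theorem (in fact, the symmetric mountain pass / $\mathbb{Z}_2$-equivariant theorem of Ambrosetti--Rabinowitz, as used in \cite{Sintzoff}) to the even functional $\Phi$ restricted to the space $H^s_{q,a,rad}(\R^n)$. By Palais' principle of symmetric criticality \cite{Palais}, since $H^s_{q,a,rad}(\R^n)$ is the fixed-point subspace of $H^s_{q,a}(\R^n)$ under the (isometric, hence orthogonal-group) action of $O(n)$, and $\Phi$ is invariant under that action, the critical points of $\Phi$ restricted to the radial subspace are genuine (radial) critical points of $\Phi$ on $H^s_{q,a}(\R^n)$, and therefore weak solutions of the equation in the sense of Definition \ref{definition-weak-solution}. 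So it suffices to verify the geometric and compactness hypotheses of the symmetric mountain pass theorem for $\Phi$ on $H^s_{q,a,rad}(\R^n)$.

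First I would check the mountain pass geometry. The quadratic-plus-$L^q_a$ part of $\Phi$ controls $\|u\|_{H^s_{q,a}}^2$ from below near the origin (up to the elementary inequality between the $L^q_a$-norm and its $q$-th power for small norm, or by splitting small/large norm), while by Lemma \ref{GN} (the Gagliardo--Nirenberg inequality, whose hypotheses $1<q$, $\max(2,q)<p<2^*_b$, $a<n(q-1)$ and \eqref{cond-ab} are exactly the standing assumptions here) the last term $\|u\|_{L^p_a}^p$ is dominated by an interpolation of $[u]_{H^s}$ and $\|u\|_{L^q_a}$ with a super-quadratic total power; hence $\Phi(u)\geq \alpha>0$ on a small sphere $\|u\|_{H^s_{q,a}}=\rho$. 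For the far geometry, fixing any $0\neq u_0\in C_0^\infty(\R^n)$ radial, $\Phi(tu_0)\to-\infty$ as $t\to+\infty$ since $p>\max(2,q)$, giving a point $e$ with $\|e\|>\rho$ and $\Phi(e)<0$. Moreover $\Phi(0)=0$ and $\Phi$ is even, so the symmetric version applies and produces an unbounded sequence of critical values $c_k\to+\infty$ via the standard genus-based minimax over symmetric sets $A_k$ of genus $\geq k$; the fact that $c_k\to\infty$ follows from the compact embedding Theorem \ref{compact-embedding} together with the super-quadratic growth, by the usual argument (e.g. estimating $\sup_{A_k}$ on finite-dimensional symmetric subspaces and using that on such subspaces all norms are equivalent while the embedding constants behave suitably).

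The main obstacle — and the step I would spend most care on — is the Palais--Smale condition for $\Phi$ on $H^s_{q,a,rad}(\R^n)$. One takes a sequence $(u_n)$ with $\Phi(u_n)$ bounded and $\Phi'(u_n)\to 0$; the standard Ambrosetti--Rabinowitz computation (pairing $\Phi'(u_n)$ with $u_n$ and combining with $\Phi(u_n)$, using $p>\max(2,q)$) gives that $([u_n]_{H^s})$ and $(\|u_n\|_{L^q_a})$ are bounded, hence $(u_n)$ is bounded in $H^s_{q,a,rad}(\R^n)$; by reflexivity a subsequence converges weakly to some $u$, and by the \emph{compact} embedding Theorem \ref{compact-embedding} (applicable under \eqref{cond-ab}) we get strong convergence $u_n\to u$ in $L^p(\R^n,|x|^b\,dx)$, which upgrades the weak to strong convergence in $H^s_{q,a,rad}(\R^n)$ by a monotonicity/convexity argument on the two coercive terms $[u]_{H^s}^2+\|u\|_{L^q_a}^q$ (the map $u\mapsto$ their sum has the $(S_+)$ property, so $\Phi'(u_n)\to0$ plus $u_n\rightharpoonup u$ plus the strong $L^p_b$-convergence of the nonlinear term forces $u_n\to u$). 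Thus PS holds, the symmetric mountain pass theorem yields radial critical points $u_k$ with $\Phi(u_k)=c_k\to+\infty$, and a non-negative radial solution is obtained either as the one produced at the (positive) mountain pass level applied to the functional with $u^p$ replaced by $(u^+)^p$, or directly from the first corollary; in either case the strong minimum principle, Theorem \ref{minimum-principle}, promotes $u\geq 0$, $u\not\equiv 0$ to $u>0$ a.e.
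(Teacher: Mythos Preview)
Your proposal is correct and follows exactly the approach the paper indicates: the paper omits the proof entirely, stating only that it is ``exactly as in \cite{Sintzoff}'' via symmetric versions of the mountain pass theorem applied to $\Phi$ on $H^s_{q,a,rad}(\R^n)$, with compactness supplied by Theorem \ref{compact-embedding} and radial criticality promoted to genuine criticality via \cite{Palais}. One small remark: your closing appeal to Theorem \ref{minimum-principle} is stated in the paper only for bounded domains, but since the corollary asks merely for a \emph{non negative} radial solution, that step is unnecessary --- working with $(u^+)^p$ in the functional already delivers $u\geq 0$.
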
 

The proofs of these corollaries are exactly as in \cite{Sintzoff}. Hence,
they are omitted.

\section{Proof of the Symmetry Breaking Result}

\label{sb-section}

In this section, we prove theorem \ref{theorem-symmetry-breaking}, on
symmetry breaking for problem \eqref{inaball} in a large ball. For the proof, 
we need the following lemma  which allows us to carry the arguments involving 
cut-off functions in the setting of the fractional Laplacian. 

\begin{lemma}
Let $0<s<\min(1,\frac{n}{2})$ and $\eta \in C^\infty(\R)$ such that $\eta=1$ on $(-\infty,1/2)$, $0 \leq
\eta \leq 1$ and $\hbox{sup}(\eta) \subset (-\infty,1]$.  Set
$$ \eta_R(x) = \eta (\left |x|/ R \right) $$
Then, if $u \in H^s(\R^n)$, then
$$ \eta_R \cdot u \to u \quad \hbox{in} \; H^s(\R^n) \; \hbox{as} \: R \to
\infty, $$
Moreover if $u \in H^s_{q,a}(\R^n)$ then
$$ \eta_R \cdot u \to u \quad \hbox{in} \; H^s_{q,a}(\R^n) \; \hbox{as} \: R \to
\infty. $$
\label{lemma-cut-off}
\end{lemma}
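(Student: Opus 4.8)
The plan is to prove the two convergences separately, and in each case to split the $H^s$ (resp.\ $H^s_{q,a}$) norm of $u - \eta_R u = (1-\eta_R)u$ into its Gagliardo seminorm part and its weighted $L^q$ part, showing each tends to $0$ as $R\to\infty$. Throughout, write $v_R = (1-\eta_R)u$, so that $v_R$ is supported in $\{|x|\ge R/2\}$ and $0\le 1-\eta_R\le 1$.

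For the weighted-$L^q$ part (which also covers the $L^2$ part of the plain $H^s$ statement, taking $q=2$, $a=0$), I would simply note that
$$
\int_{\R^n} |x|^a\,|v_R|^q\,dx \le \int_{|x|\ge R/2} |x|^a\,|u|^q\,dx ,
$$
and the right-hand side is the tail of a convergent integral (since $u\in L^q_a$ by definition of $H^s_{q,a}$, resp.\ $u\in L^2$ for the plain statement), hence tends to $0$ as $R\to\infty$ by dominated convergence. This step is routine.

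The main work — and the main obstacle — is the Gagliardo seminorm $[v_R]_{H^s}$. Here I would use the standard product estimate for fractional Sobolev seminorms: for the cutoff $\zeta_R := 1-\eta_R$ one writes, for $x\ne y$,
$$
v_R(x)-v_R(y) = \zeta_R(x)\bigl(u(x)-u(y)\bigr) + u(y)\bigl(\zeta_R(x)-\zeta_R(y)\bigr),
$$
so that, by the elementary inequality $(A+B)^2\le 2A^2+2B^2$,
$$
[v_R]_{H^s}^2 \le C\!\int_{\R^n}\!\!\int_{\R^n}\! \frac{\zeta_R(x)^2|u(x)-u(y)|^2}{|x-y|^{n+2s}}\,dx\,dy
\;+\; C\!\int_{\R^n}\!\!\int_{\R^n}\! \frac{|u(y)|^2|\zeta_R(x)-\zeta_R(y)|^2}{|x-y|^{n+2s}}\,dx\,dy .
$$
The first term is dominated by $\int\!\int \mathbf{1}_{\{|x|\ge R/2\}}\frac{|u(x)-u(y)|^2}{|x-y|^{n+2s}}$, which goes to $0$ by dominated convergence since $[u]_{H^s}<\infty$ and the indicator tends to $0$ pointwise a.e. For the second term, one uses that $\zeta_R$ is Lipschitz with $\|\nabla\zeta_R\|_\infty\le C/R$ and $0\le\zeta_R\le1$, so $|\zeta_R(x)-\zeta_R(y)|^2\le \min\{1,\ C|x-y|^2/R^2\}$; splitting the inner integral over $|x-y|\le R$ and $|x-y|>R$ gives
$$
\int_{\R^n}\frac{|\zeta_R(x)-\zeta_R(y)|^2}{|x-y|^{n+2s}}\,dx \le \frac{C}{R^2}\int_{|x-y|\le R}\frac{dx}{|x-y|^{n+2s-2}} + \int_{|x-y|>R}\frac{dx}{|x-y|^{n+2s}} \le C\,R^{-2s},
$$
where finiteness of both pieces uses $0<s<1$. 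Hence the second term is bounded by $C R^{-2s}\|u\|_{L^2}^2$ (for the plain $H^s$ statement, $u\in L^2$; for the weighted statement, $u\in L^2$ as well by Corollary~\ref{coro-inL2} when $q=2$, and for general $q$ one instead notes that $u(y)$ is being integrated against the $\zeta_R$-difference kernel only over the region where $\zeta_R(x)\ne\zeta_R(y)$, i.e.\ for $|y|$ comparable to $R$ or with $|x-y|\gtrsim \mathrm{dist}(y,\{R/2\le|\cdot|\le R\})$, so one localizes in $|y|$ and uses $u\in L^2_{loc}$ together with the decay of the kernel). In all cases this term tends to $0$ as $R\to\infty$. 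Combining, $[v_R]_{H^s}\to 0$, which with the $L^q_a$ estimate above finishes both assertions; the hypothesis $s<n/2$ enters only through Corollary~\ref{coro-inL2} to guarantee $H^s_{q,a}(\R^n)\subset L^2(\R^n)$ in the second claim. The delicate point to handle carefully is the second term in the general weighted case, where one cannot appeal to global $L^2$ control of $u$ and must instead exploit that the $\zeta_R$-difference is supported near the annulus $\{R/2\le|x|\le R\}$.
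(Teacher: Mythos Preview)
Your direct computation via the product decomposition and dominated convergence is exactly the bounded-convergence argument the paper cites from \cite{Raman} for the first assertion; for the second, the paper simply invokes Corollary~\ref{coro-inL2} (so that $u\in H^s(\R^n)$) and reapplies the first part, which is equivalent to what you do. Your caveat about general $q$ is well taken---Corollary~\ref{coro-inL2} is stated only for $q=2$, so neither your argument nor the paper's fully covers arbitrary $q$ without further work, but only $q=2$ is actually used in the application to Theorem~\ref{theorem-symmetry-breaking}.
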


\begin{proof}
A simple proof of the first assertion is given in \cite{Raman}, which is based on a bounded convergence 
argument. The second assertion follows immediately, likewise (recall remark
\ref{remark-characterizations}).  
\end{proof}

Now we are ready to give the proof of the symmetry breaking result.

\begin{proof}
Following the idea of \cite{Sintzoff}, we consider the minimization problems:

$$ M(R) = \inf \left\{ 
[u]_{H^s}^2 + \int_{B_R} |x|^a u^2 \; dx 
\; : \; u \in H^{s}_{2,a,0}(B_R), 
\int_{B_R} |x|^b (u^+)^p =1 \right\}  $$

$$ m(R) = \inf \left\{ 
[u]_{H^s}^2 + \int_{B_R} |x|^a u^2 \; dx 
\; : \; u \in H^{s}_{2,a,0,rad}(B_R), 
\int_{B_R} |x|^b (u^+)^p =1 \right\}  $$

and the corresponding minimization problems for $\R^n$

$$ M(\infty) = \inf \left\{ 
[u]_{H^s}^2  + \int_{\R^n} |x|^a u^2 \; dx 
\; : \; u \in H^{s}_{2,a}(\R^n), 
\int_{\R^n} |x|^b (u^+)^p =1 \right\}  $$

$$ m(\infty) = \inf \left\{ 
[u]_{H^s}^2 + \int_{\R^n} |x|^a u^2 \; dx 
\; : \; u \in H^{s}_{2,a,rad}(\R^n), 
\int_{\R^n} |x|^b (u^+)^p =1 \right\}  $$

\noindent where the Gagliardo $[\cdot]_{H^s}$ is allways taken in $\R^n$, i.e. is
given by \eqref{Gagliardo-seminorm} in both cases. 

The main idea of the proof is to compare the minimum energy levels for those
problems.

\medskip

First we consider the minimization problems $m(R)$ and $M(R)$, for the ball
$B_R$. We note that we have a continuous embedding
$$ H^s_{2,a,0}(B_R) \hookrightarrow H^s_0(B_R) $$
hence $m(R)$ and $M(R)$ are well defined. Moreover since $p<2^*$, 
by Rellich theorem, the embedding 
$H^s_{2,a,0}(B_R) \hookrightarrow L^p(B_R)$ is compact. It follows that $m(R)$ and $M(R)$ are 
achieved, and are positive for every $R>0$.  Moreover, since for every $u \in
H^s_{2,a,0}(B_R)$, $u^+$ is in the same space (for checking that, the characterizations 
in remark \ref{remark-characterizations} are useful) and we have that
$$ [u^+]_{H^s} \leq [u]_{H^s}, $$
we see that they have non-negative minimizer, which by the Lagrange multiplier rule (and the 
symmetric  criticality principle in \cite{Palais} in the case of $m(R)$ ) are weak solutions of 
in the sense of definition \ref{definition-weak-solution} of
\be  \left\{
\begin{gathered}
(-\Delta)^{s} u + |x|^a u = \lambda  |x|^b u^{p-1} \quad \hbox{in} \; B_R \\
u \geq 0 \;  \hbox{a.e. in}  \; B_R, \quad
u \equiv 0 \;  \hbox{in} \; \R^n-B_R
\end{gathered}
\right. \label{inaball-lambda} 
\ee
for some $\lambda$. Using $u$ itself as a test function, we see that $\lambda=M(R)>0$ (respectively $\lambda=m(R)>0$), 
so since $p \neq 2$, by replacing $u$ by some positive multiple, we can get solutions of 
\eqref{inaball-lambda} with $\lambda=1$. Moreover,  
by the strong minimum principle (theorem
\ref{minimum-principle}) those minimizers are strictly positive almost everywhere in $B_R$.

\medskip

Next we consider the minimization problems $m(\infty)$ and $M(\infty)$ for
the whole space $\R^n$. In a similar way as before, it follows from theorem 
\ref{compact-embedding} (with $q=2$), that $m(\infty)$ is achieved and is 
positive. We remark that the condition $b-a < \sigma(n,s,2,a)(p-2)$ in that theorem is equivalent to
\eqref{compactness-condition}. 

\medskip

However, we claim that $M(\infty)=0$. To see that, choose $u \in
C_0^\infty(B_1)$, $u \not \equiv 0$, $u \geq 0$ and consider the translated functions
$$ u_t(x)= u(x-t v) $$
for a fixed unitary vector $v\in \R^n$. Then,
\be M(\infty) \leq 
\frac{[u_t]_{H_s}^2 + \int_{\R^n} |x|^a u_t^2 \; dx }
{\left( \int_{\R^n} |x|^b u_t^p \; dx \right)^{2/p}  } 
\label{M-infty-bound}. \ee

We have the following estimates
$$ [u_t]_{H^s} = [u]_{H^s}, $$

$$ \int_{\R^n}  |x|^a u_t^2 \; dx =
\int_{B_1} |y+t v|^a u^2(y) \; dy
\leq (1+t)^a \int_{B_1} u^2(y) \; dy, $$

$$  \int_{\R^n} |x|^b u_t^p \; dx = 
\int_{B_1} |y+t v|^b u^p(y) \; dy
\geq (t-1)^b \int_{B_1} u^p(y) \; dy. $$

Hence, since $ 2b > ap $, we see that $M(\infty)=0$ by letting $t \to
+\infty$ in \eqref{M-infty-bound}.

\medskip

We claim that
\be \lim_{R \to +\infty} M(R)= M(\infty) \; \label{limit-M}, \ee

\be \lim_{R \to +\infty} m(R)= m(\infty) \; \label{limit-m}. \ee

\noindent It is clear that $M(R) \geq M(\infty)$, $m(R) \geq m(\infty)$. On the other
hand, let us prove \eqref{limit-M}: given $\varepsilon>0$, choose $u \in H^{s}_{2,a}(\R^n)$ such that
$$  \int_{\R^n} |x|^b (u^+)^p =1 $$
and
$$ E(u) := [u]_{H^s}^2 + \int_{\R^n} |x|^a u^2 \; dx < M(R) + \varepsilon. $$
Consider the cut-off functions $\eta_R$ constructed 
in lemma \ref{lemma-cut-off}. Then, from that lemma, 
$$ E(\eta_R \cdot u) \to E(u) \; \hbox{as} \; R \to +\infty. $$
Since $\eta_R \cdot u \in H^s_0(B_R)$, $ M(R)  \leq M(\eta_R u) $.
We conclude that
$$ M(R) \leq M(\infty) + \varepsilon \; \hbox{for} \; R \geq
R_0(\varepsilon).$$
This proves \eqref{limit-M}. The proof of  \eqref{limit-m} is similar. 

\medskip

It follows that if $R$ is large enough, $M(R) < m(R)$, and hence for $R$
large enough, the positive minimizers that we have found for $m(R)$ and
$M(R)$ are different. Hence, problem \eqref{inaball} admits both a radial positive solution, 
and a non-radial one.

\end{proof}

%citar los papers de Willem sobre inmersiones de espacios de Sobolev con
%pesos

{\bf Acknowledgments}
I want to thank L. Del Pezzo, N. Wolanski and A. Salort for several interesting discussions on the fractional Laplacian, 
and E. J. Lami Dozo for bringing the symmetry breaking phenomenon and the reference \cite{Sintzoff} to my attention.

\end{document}